\newtheorem{thm}{Theorem}
\newtheorem{prop}[thm]{Proposition}
\newtheorem{lem}[thm]{Lemma}
\newtheorem{cor}[thm]{Corollary}
\newtheorem{conj}[thm]{Conjecture}
\newtheorem{mydef}[thm]{Definition}
\theoremstyle{remark}
\newtheorem{rem}[thm]{Remark}
\newtheorem{notn}[thm]{Notation}
\newtheorem{assmpt}[thm]{Assumption}
\newcommand{\ord}{\mathrm{ord}}
\newcommand{\term}{\mathrm{term}}
\newcommand{\dege}{\mathrm{dege}}
\newcommand{\orde}{\mathrm{orde}}
\newcommand{\lead}{\mathrm{lead}}
\newcommand{\dgap}{\mathrm{dgap}}
\newcommand{\maxt}{\mathrm{maxt}}
\newcommand{\depth}{\mathrm{depth}}
\title{Identities of the Function $f(x,y) = x^2 + y^3$}
\author{Roger Tian}
\date{\today}
\newcommand\ackname{Acknowledgements}
  \newenvironment{acknowledgements}{%
      \titlepage
      \null\vfil
      \@beginparpenalty\@lowpenalty
      \begin{center}%
        \bfseries \ackname
        \@endparpenalty\@M
      \end{center}}%
     {\par\vfil\null\endtitlepage}
  \newenvironment{acknowledgements}{%
      \if@twocolumn
        \section*{\abstractname}%
      \else
        \small
        \begin{center}%
          {\bfseries \ackname\vspace{-.5em}\vspace{\z@}}%
        \end{center}%
        \quotation
      \fi}
      {\if@twocolumn\else\endquotation\fi}
\begin{document}
\maketitle
\begin{abstract}
Harvey Friedman asked in $1986$ whether the function $f(x,y) = x^2 + y^3$ on the real plane $\mathbb{R}^2$ satisfies any identities; examples of identities are commutativity and associativity. To solve this problem of Friedman, we must either find a nontrivial identity involving expressions formed by recursively applying $f$ to a set of variables $\{x_1,x_2, \ldots, x_n\}$ that holds in the real numbers or to prove that no such identities hold. In this paper, we will solve certain special cases of Friedman's problem and explore the connection between this problem and certain Diophantine equations.
\end{abstract}
\begin{acknowledgements}
I would like to thank my thesis advisor, George Bergman, for introducing Lemma \ref{single implies multiple} to me, shortening the proofs of several of the results below, pointing out areas of the paper that needed clarification, and giving advice on how to better organize this thesis.
\end{acknowledgements}

Notice that, given any identity in any number of variables, one can get an identity in one variable $x$ by replacing all the variables of the given identity by $x$. The single variable case of Friedman's problem, whether or not there exists a nontrivial identity that holds in the real numbers involving expressions formed by recursively applying $f$ to the variable set $\{x\}$, may be easier to treat than the general problem of multiple variables. 

\textit{A priori}, proving that no nontrivial identity of one variable holds does not completely solve the general problem, because two expressions, if equal as polynomials, that have the same ``structure'' regarding the composition of $f$'s (ignoring the variables involved) lead to the trivial identity when all the variables are replaced by $x$. For instance, $f(f(x,y),f(y,x)) = f(f(y,x),f(x,y))$ as polynomials implies $f(f(x,x),f(x,x)) = f(f(x,x),f(x,x))$ as polynomials. However, proving that no nontrivial identity of one variable holds would tell us that two expressions can be equal as polynomials only if they have the same structure. For instance, since $f(x,f(x,x))$ and $f(f(x,f(x,x)),x)$ are not equal as polynomials, we know that $f(x,f(y,z))$ and $f(f(x,f(y,z)),y)$ cannot be equal as polynomials. We will use this observation to prove in Lemma \ref{single implies multiple} that a nontrivial multiple-variable identity holds only if a nontrivial 1-variable identity holds.

We will follow the convention that $0 \notin \mathbb{N}$.

\begin{notn}
\label{variable position}
Suppose $G(x_1,x_2, \ldots, x_n)$ is an expression formed by recursively applying $f$ to the variable set $\{x_1,x_2, \ldots, x_n\}$. We shall call an occurrence of a variable in $G(x_1,x_2, \ldots, x_n)$ a \textbf{variable position}. Supposing that $G(x_1,x_2, \ldots, x_n)$ contains $l$ variable positions where $l \in \mathbb{N}$, we will proceed from left to right and label these successive variable positions as $v_1$, $v_2$, \ldots, $v_l$, and for all $i = 1, 2, \ldots, l$ we will denote by $\bar{v_i}$ the variable in $\{x_1,x_2, \ldots, x_n\}$ occurring in the variable position $v_i$. We define the \textbf{depth} of a variable position $v$ occurring in an $f$-expression $f(A,B)$ to be one more than its depth in $A$ or $B$ (whichever $v$ occurs in), where we start by defining the depth of the bare expression $x_i$ where $i \in \{1,2, \ldots, n\}$ to be $0$; we will denote the depth of $v$ by $\depth(v)$. For example, the variable positions $v_1$, $v_2$, $v_3$ of $f(x,f(y,x))$ hold the variables $\bar{v_1} = x$, $\bar{v_2} = y$, and $\bar{v_3} = x$, while we have $\depth(v_1) = 1$, $\depth(v_2) = 2$ and $\depth(v_3) = 2$. We can associate to $G(x_1,x_2, \ldots, x_n)$ the $l$-tuple $((\bar{v_1},\depth(v_1)),(\bar{v_2},\depth(v_2)), \ldots ,(\bar{v_l},\depth(v_l)))$. It is clear that $G(x_1,x_2, \ldots, x_n)$  completely determines $((\bar{v_1},\depth(v_1)),(\bar{v_2},\depth(v_2)), \ldots ,(\bar{v_l},\depth(v_l)))$.
\end{notn}

The following result was pointed out to the author by George Bergman, and its proof follows the ideas outlined by Bergman. This lemma establishes that, to answer Friedman's problem in the negative, it suffices to prove that no nontrivial identity of one variable holds.

\begin{lem}
\label{single implies multiple}
Suppose that $f$ satisfies no nontrivial 1-variable identities in $\mathbb{R}$. Let $n$ be a positive integer and let $\{x_1,x_2, \ldots, x_n\}$ be a set of variables. Then $f$ satisfies no nontrivial identities involving the variables $x_1$, $x_2$, \ldots, $x_n$ in $\mathbb{R}$.
\end{lem}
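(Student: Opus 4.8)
The plan is to prove the contrapositive: assuming that $f$ \emph{does} satisfy some nontrivial identity in $x_1, \ldots, x_n$, I would manufacture a nontrivial $1$-variable identity. So suppose $P$ and $Q$ are two syntactically distinct $f$-expressions in the variables $x_1, \ldots, x_n$ that agree as polynomial functions on $\mathbb{R}^n$. The key general fact I will use repeatedly is that equality of polynomial functions is preserved under substitution: for any $f$-expressions $g_1, \ldots, g_n$ in a single variable $x$, the expressions $P(g_1(x), \ldots, g_n(x))$ and $Q(g_1(x), \ldots, g_n(x))$ are again equal as functions of $x$, and they are genuine $1$-variable $f$-expressions. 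Since by hypothesis no nontrivial $1$-variable identity holds, any such pair of substituted expressions must in fact be \emph{identical as expressions}. The whole proof then consists of choosing the $g_i$ so cleverly that identity of the substituted expressions forces identity of $P$ and $Q$ themselves, contradicting their distinctness. The argument is purely combinatorial and never uses the specific form $x^2 + y^3$; it relies only on the unique readability of $f$-expressions, i.e.\ that a tree $f(A,B)$ determines $A$ and $B$.

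The first substitution is the one from the discussion preceding the lemma: take $g_i(x) = x$ for every $i$. Then $P(x, \ldots, x) = Q(x, \ldots, x)$ is a $1$-variable identity, so by hypothesis these two expressions coincide. But $P(x,\ldots,x)$ is nothing more than the underlying tree structure (``skeleton'') of $P$ with every leaf relabelled $x$, and likewise for $Q$; hence $P$ and $Q$ must have the same skeleton. In the language of Notation \ref{variable position}, this says $P$ and $Q$ have the same number $l$ of variable positions and the same depth sequence $(\depth(v_1), \ldots, \depth(v_l))$, so that $P$ and $Q$ differ only in which variables occupy their (positionally identical) leaves.

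The second substitution recovers those labels. I would fix pairwise \emph{distinct} $1$-variable $f$-expressions $g_1, \ldots, g_n$ — for concreteness, the left combs defined by $g_1 = x$ and $g_{i} = f(g_{i-1}, x)$, which are distinguished by their number of leaves — and substitute $x_i \mapsto g_i(x)$. As above, the hypothesis forces $P(g_1, \ldots, g_n)$ and $Q(g_1, \ldots, g_n)$ to be identical as expressions. Now I would induct on the common skeleton $T$ of $P$ and $Q$ to conclude $P = Q$ as expressions. If $T$ is a single leaf, then $P = x_a$ and $Q = x_b$, so the substituted expressions are $g_a$ and $g_b$; their identity together with the distinctness of the $g_i$ gives $a = b$, hence $P = Q$. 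If $T = f(T_1, T_2)$, write $P = f(P_1,P_2)$ and $Q = f(Q_1, Q_2)$ with $P_j, Q_j$ of skeleton $T_j$; unique readability splits the identity $f(\widehat{P_1}, \widehat{P_2}) = f(\widehat{Q_1}, \widehat{Q_2})$ of the substituted expressions into $\widehat{P_j} = \widehat{Q_j}$ for $j = 1,2$, and the inductive hypothesis yields $P_j = Q_j$, whence $P = Q$. This contradicts the assumed distinctness of $P$ and $Q$ and completes the contrapositive.

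I expect the main obstacle to be making the reduction to a \emph{common} skeleton airtight, since it is precisely this step that prevents the final induction from stalling: without it, the root of $P$ might be a bare variable while the root of $Q$ is an application of $f$, and the plugged-in subexpressions $g_i$ could in principle conspire to reproduce part of the surrounding tree, breaking the clean ``split at the outermost $f$'' step. The first substitution removes this danger entirely by aligning the two trees before any labels are examined, after which the distinctness of the $g_i$ does all the remaining work. The only other point requiring care is the repeated, and legitimate, use of the hypothesis in the form ``an identity of $1$-variable expressions that holds as a polynomial must be an identity of expressions.''
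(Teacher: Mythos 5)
Your proposal is correct, and its overall architecture matches the paper's own proof: both argue by contraposition, both first substitute $x$ for every variable and invoke the hypothesis to conclude that the two expressions share the same skeleton (same number of variable positions with the same depths), and both then apply the hypothesis a second time to a substitution designed to distinguish variable labels. The difference is in that second substitution. The paper locates the first variable position $j$ at which the expressions disagree, say holding $x_k$ in one and $x_m \ne x_k$ in the other, and substitutes $f(x,x)$ for $x_k$ alone and $x$ for every other variable; the contradiction is then read off from depth sequences: at some position $p \ge j$ the resulting $1$-variable expressions have depths differing by one, so they are distinct yet induce equal polynomials. You instead substitute pairwise distinct combs $g_i$ for \emph{all} variables simultaneously and recover $P = Q$ by structural induction on the common skeleton, splitting at each internal node by unique readability. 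Your version avoids the first-disagreement bookkeeping and the depth-counting claim, which the paper states rather tersely (``there exists at least one $p \ge j$\ldots'') and leaves to the reader to verify; the cost is that you need $n$ pairwise distinct substituting expressions plus the (easy) verification that they are distinct, where the paper gets by with only $x$ and $f(x,x)$. Both uses of the hypothesis are legitimate, and the worry you flag about aligning skeletons before splitting at the outermost $f$ is exactly the issue that the first substitution resolves in both arguments.
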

\begin{proof}
Suppose that two distinct expressions $G(x_1,x_2, \ldots, x_n)$, $H(x_1,x_2, \ldots, x_n)$ formed by recursively applying $f$ to $\{x_1,x_2, \ldots, x_n\}$ are equal as polynomials. It follows that $G(x,x, \ldots, x)$ and $H(x,x, \ldots, x)$ are equal as polynomials. Then, by the assumption of no nontrivial 1-variable identities, $G(x,x, \ldots, x)$ and $H(x,x, \ldots, x)$ must be the same expression, so $G(x_1,x_2, \ldots, x_n)$ and $H(x_1,x_2, \ldots, x_n)$ must have the same number of variable positions. We will label the variable positions of $G(x_1,x_2, \ldots, x_n)$ by $v_1$, $v_2$, \ldots, $v_l$ and the variable positions of $H(x_1,x_2, \ldots, x_n)$ by $v_1'$, $v_2'$, \ldots, $v_l'$, where $l$ is some positive integer. Now, $G(x_1,x_2, \ldots, x_n)$ determines the $l$-tuple \[((\bar{v_1},\depth(v_1)),(\bar{v_2},\depth(v_2)), \ldots ,(\bar{v_l},\depth(v_l)))\] and $H(x_1,x_2, \ldots, x_n)$ determines the $l$-tuple \[((\bar{v_1'},\depth(v_1')),(\bar{v_2'},\depth(v_2')), \ldots ,(\bar{v_l'},\depth(v_l'))).\] We know that for each $i = 1, 2, \ldots, l$ we have $\depth(v_i) = \depth(v_i')$, i.e. corresponding variable positions have the same depth. Let $j$ be the smallest positive integer such that $\bar{v_j} = x_k \ne x_m = \bar{v_j'}$ where $k \ne m$. Now replace $x_k$ in $G(x_1,x_2, \ldots, x_n)$, $H(x_1,x_2, \ldots, x_n)$ by $f(x,x)$ and replace $x_i$ in $G(x_1,x_2, \ldots, x_n)$, $H(x_1,x_2, \ldots, x_n)$ by $x$ for each $i \ne k$, and we obtain a 1-variable identity. Then there exists at least one
$p \ge j$ in $\mathbb{N}$ such that the $p$th variable position of the 1-variable expression resulting from $G(x_1,x_2, \ldots, x_n)$ has a depth one greater than that of the $p$th variable position of the 1-variable expression resulting from $H(x_1,x_2, \ldots, x_n)$. Therefore, the two 1-variable expressions in the identity are distinct, which is a contradiction.
\end{proof}
The proof of Lemma \ref{single implies multiple} leaves open a more difficult question, as the statement of Lemma \ref{single implies multiple} is weaker than what we state in the following
\begin{conj}
\label{single multiple conjecture}
Suppose that $G(x)$ is an expression formed by recursively applying $f$ to the variable set $\{x\}$ and that $G(x)$ has the variable positions $v_1$, $v_2$, \ldots, $v_l$ for some positive integer $l$. Let $n$ be a positive integer and $\{x_1,x_2, \ldots, x_n\}$ be a set of variables. Let $G(x_1,x_2, \ldots, x_n)$ be an n-variable expression obtained by letting $\bar{v_i} \in \{x_1,x_2, \ldots, x_n\}$ for all $i = 1, 2, \ldots, l$. If $G(x)$ cannot occur as either side of a nontrivial 1-variable identity, then $G(x_1,x_2, \ldots, x_n)$ cannot occur as either side of a nontrivial $n$-variable identity.
\end{conj}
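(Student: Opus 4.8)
The plan is to follow the specialization idea behind Lemma~\ref{single implies multiple}, but to exploit the weaker hypothesis in a sharper way, so as to pin down not merely the number and depths of the variable positions of a competitor but its \emph{entire} tree of $f$'s. Suppose for contradiction that $H(x_1,\ldots,x_n)$ is an expression distinct from $G(x_1,\ldots,x_n)$ with $G(x_1,\ldots,x_n) = H(x_1,\ldots,x_n)$ as polynomials. First I would substitute $x_i = x$ for every $i$. Since $G(x_1,\ldots,x_n)$ was built by relabelling the variable positions of $G(x)$ without disturbing its tree of $f$'s, this substitution returns exactly $G(x)$, so $G(x) = H(x,\ldots,x)$ as polynomials. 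Because by hypothesis $G(x)$ cannot occur as a side of a nontrivial $1$-variable identity, $H(x,\ldots,x)$ must be the \emph{same expression} as $G(x)$; hence $H$ has the same tree of $f$'s as $G$, and $H(x_1,\ldots,x_n)$ can differ from $G(x_1,\ldots,x_n)$ only in the variables assigned to corresponding positions. Note that the substitution $x_k \mapsto f(x,x)$ used in the proof of Lemma~\ref{single implies multiple} is not available here, as it would produce an identity about an enlargement of $G$ rather than about $G$ itself; the weaker hypothesis forces us to argue directly.

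This reduces the conjecture to a purely algebraic injectivity statement: \emph{for a fixed tree of $f$'s $T$, distinct labelings of its variable positions by $\{x_1,\ldots,x_n\}$ yield distinct polynomials.} I would attack this by induction on $T$. The base case, a single variable position, is immediate. For the inductive step, write the two roots as $f(A,B)$ and $f(C,D)$, where $A,C$ are the polynomials produced by the common left subtree under the two labelings and $B,D$ by the common right subtree; equality as polynomials becomes $A^2 + B^3 = C^2 + D^3$. The observation that keeps this tractable is that the total degree attached to a subtree depends only on its shape, not on its labeling, so $\deg A = \deg C =: d_L$ and $\deg B = \deg D =: d_R$ are fixed, as are $\deg(A^2) = 2d_L$ and $\deg(B^3) = 3d_R$; moreover every polynomial in sight has strictly positive coefficients, so no cancellation occurs.

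When $2d_L \neq 3d_R$, say $2d_L > 3d_R$, the square part dominates in top degree. Comparing the degree-$2d_L$ homogeneous components forces $(A_\mathrm{top})^2 = (C_\mathrm{top})^2$, hence $A_\mathrm{top} = C_\mathrm{top}$ by positivity, and the difference $A^2 - C^2 = D^3 - B^3$ is thereby pushed into degree at most $3d_R < 2d_L$. I expect that iterating this leading-form comparison, together with an induction on degree, forces $A = C$ and $B = D$, after which the inductive hypothesis applied to the two subtrees closes the step. I regard this as the more tractable case.

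The hard part is the tie case $2d_L = 3d_R$, where $d_L = 3k$ and $d_R = 2k$, so that $A^2$ and $B^3$ share the top degree $6k$ and their leading forms genuinely mix: one is left with $A_\mathrm{top}^2 + B_\mathrm{top}^3 = C_\mathrm{top}^2 + D_\mathrm{top}^3$ among homogeneous forms, with no elementary way to split the square from the cube. This is precisely the polynomial shadow of the Diophantine equation $X^2 + Y^3 = Z^2 + W^3$ alluded to in the abstract, and I expect it to be the crux of the entire problem. My proposed route is to strengthen the induction hypothesis to assert simultaneously that the \emph{leading form} of the polynomial of a tree also determines its labeling, and then to classify the representations of such a form as $P^2 + Q^3$ for this particular tree geometry. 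Ruling out nontrivial coincidences there is the genuine obstacle, and it is where I expect the connection to Diophantine equations to be unavoidable.
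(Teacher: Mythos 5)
First, a point of calibration: the statement you set out to prove is Conjecture \ref{single multiple conjecture}, which the paper leaves \emph{open}; the only part of it the paper settles is the special case noted in Remark \ref{left right multiple} (expressions such as $f(f(x,f(y,z)),y)$, handled by the outside-in arguments of Propositions \ref{appending x left} and \ref{appending x right}). So there is no proof in the paper to compare yours against, and your attempt must stand on its own. Its first step is correct and is the right first move: substituting $x_i \mapsto x$ for all $i$ turns a hypothetical identity $e(G(x_1,\ldots,x_n)) = e(H(x_1,\ldots,x_n))$ into $e(G(x)) = e(H(x,\ldots,x))$, and the $e$-isolation of $G(x)$ forces $H(x,\ldots,x)$ to be the very expression $G(x)$, so $H$ carries the same tree of $f$'s as $G$. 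You are also right that the $x_k \mapsto f(x,x)$ substitution from the proof of Lemma \ref{single implies multiple} is unavailable under the weaker hypothesis, since it produces an identity between enlargements of $G$ rather than about $G$ itself. This reduction is tight: the conjecture is thereby equivalent to labeling injectivity for the fixed tree of $G$. But note that after the reduction you silently discard the $e$-isolation hypothesis and attack labeling injectivity for \emph{arbitrary} trees, which is a strictly stronger open statement; it is conceivable that labelings of some non-$e$-isolated tree collide while the conjecture is still true.

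What follows the reduction is a program with two genuine gaps, not a proof. Write $A,C$ for the polynomials of the left subtree and $B,D$ for those of the right subtree under the two labelings, with $d_L = \deg A = \deg C$ and $d_R = \deg B = \deg D$. (i) In the case $2d_L > 3d_R$, positivity does give $A_{\mathrm{top}} = C_{\mathrm{top}}$, but the claimed iteration is not routine: if $A \neq C$ then $\deg(A^2 - C^2) = \deg(A-C) + d_L \le 3d_R$, which is a contradiction only when $3d_R < d_L$; in the range $d_L \le 3d_R < 2d_L$ (e.g. $d_L = 6$, $d_R = 3$) degree bookkeeping permits $A \neq C$ and $B \neq D$ simultaneously, with the discrepancies $(A-C)(A+C)$ and $D^3 - B^3$ cancelling each other at degrees where squares and cubes mix, so something beyond leading-form comparison is required. (ii) The tie case you concede, and the strengthened induction hypothesis you propose there --- that the leading form of a labeled tree determines its labeling --- is false. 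Take the two-core tree underlying $f(f(x,f(x,x)),f(f(x,x),x))$ and the two labelings $f(f(a,f(b,c)),f(f(d,e),g))$ and $f(f(a,f(b,e)),f(f(d,c),g))$, which differ by exchanging $c$ and $e$: both induced polynomials have leading form $c^{18} + e^{18}$, yet the polynomials differ, since the monomial $2a^2c^9$ occurs in the first while no monomial of the second contains both $a$ and $c$. So the leading form pins down a labeling at best up to a permutation of the variables sitting in the cores, and ruling out exactly such permutations is the crux; this is the multivariable analogue of the one-variable difficulty the paper attacks with cores, degree-gaps, and the equation $2^a3^b + 2^c3^d = 2^e3^f + 2^g3^h$, and there the paper itself obtains only partial results. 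Your identification of $X^2 + Y^3 = Z^2 + W^3$ as the obstruction is consistent with the paper's point of view, but the conjecture remains open after your argument.
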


Below, we prove some results on the single variable case of Friedman's problem. They show that certain classes of expressions cannot occur as either side of a nontrivial identity.
\begin{mydef}
\label{degree order}
If $p(x) = \sum_{k=m}^n{a_kx^k}$ is a polynomial where $m \le n$ are nonnegative integers and where $a_m$, $a_n$ are nonzero, then $m$ will be called the \textbf{order} of $p(x)$, and $n$ will be called the \textbf{degree} of $p(x)$.
\end{mydef}
In what follows, by an \textbf{$f$-expression} we will, unless otherwise specified, always mean a symbolic expression in $f$ and $x$ that is formed by recursively applying $f$ to the variable set $\{x\}$; we also consider $x$ itself an $f$-expression. We will denote the set of all $f$-expressions by $\term(f;x)$. Let $e:\,\, $ $\term(f;x)$ $\, \longrightarrow \, \mathbb{Z}[x]$ be the evaluation map that assigns to each $f$-expression its corresponding polynomial in $\mathbb{Z}[x]$. We say that $e(A)$ is the \textbf{polynomial induced by} the $f$-expression $A$. For example, $e(f(x,f(x,x))) = x^2 + (x^2 + x^3)^3$. If $A$ and $B$ are two $f$-expressions and $e(A) = e(B)$, then we say that $A$ and $B$ are \textbf{$e$-equivalent}. We shall call an $f$-expression \textbf{$e$-isolated} if it is not $e$-equivalent to any other $f$-expression, i.e. it cannot occur as either side of a nontrivial identity. For example, $f(x,x)$ is $e$-isolated because, as it is not hard to see, $e(f(A_1,A_2)) = e(f(x,x))$, where $A_1$, $A_2$ are $f$-expressions, implies $A_1 = x$ and $A_2 = x$.

\begin{notn}
\label{dege orde}
Let $A \in \term(f;x)$. For brevity, we will denote the degree of $e(A)$ by $\dege(A)$ and the order of $e(A)$ by $\orde(A)$. For the degree and order of any polynomial $p(x)$ that is not written as the induced polynomial of some $B \in \term(f;x)$, we retain the standard notation $\deg(p(x))$ and $\ord(p(x))$ respectively. For example, we would denote the degree of $p(x) = x^2 + x^3$ as $\deg(p(x))$ if we did not explicitly state or did not know beforehand that $p(x) = e(f(x,x))$.
\end{notn}

In the next three propositions, let $A, B \in \, \term(f;x)$.

\begin{prop}
\label{appending x left}
If $e(f(C_1,C_2)) = e(f(x,B))$ where $C_1$, $C_2$ are $f$-expressions, then we must have $C_1 = x$ and $e(C_2) = e(B)$.
\end{prop}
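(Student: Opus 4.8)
The plan is to pass to induced polynomials and compare \emph{orders}. Writing out both sides, the hypothesis $e(f(C_1,C_2)) = e(f(x,B))$ becomes
\[
e(C_1)^2 + e(C_2)^3 = x^2 + e(B)^3
\]
in $\mathbb{Z}[x]$. First I would record an auxiliary observation that drives the whole argument: by an easy induction on the structure of an $f$-expression, every $e(A)$ has non-negative integer coefficients (the base case $e(x)=x$ is clear, and if $e(A),e(B)$ have non-negative coefficients then so do $e(A)^2$, $e(B)^3$ and their sum). Since each $e(A)$ is a nonzero polynomial of positive degree, its leading and trailing coefficients are therefore strictly positive, so no cancellation occurs when we add, square, or cube. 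This yields the clean formulas $\dege(f(A,B)) = \max(2\dege(A), 3\dege(B))$ and, crucially,
\[
\orde(f(A,B)) = \min\bigl(2\,\orde(A),\, 3\,\orde(B)\bigr).
\]

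The decisive step is an order comparison. On the right-hand side, $x^2$ has order $2$ while $e(B)^3$ has order $3\,\orde(B) \ge 3$ (every $f$-expression has order at least $1$), so by the no-cancellation remark the order of the right-hand side is exactly $2$. On the left-hand side the order is $\min\bigl(2\,\orde(C_1), 3\,\orde(C_2)\bigr)$, and equating gives $\min\bigl(2\,\orde(C_1), 3\,\orde(C_2)\bigr) = 2$. Because $3\,\orde(C_2) \ge 3$, the minimum can equal $2$ only if $2\,\orde(C_1) = 2$, i.e. $\orde(C_1) = 1$. Now I invoke the key structural fact, which I expect to be the main point of the proof: the only $f$-expression of order $1$ is $x$ itself. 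Indeed, if $C_1 \ne x$ then $C_1 = f(D_1,D_2)$ for some $f$-expressions $D_1,D_2$, whence $\orde(C_1) = \min\bigl(2\,\orde(D_1), 3\,\orde(D_2)\bigr) \ge 2$, a contradiction. Therefore $C_1 = x$.

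With $C_1 = x$ in hand the equation collapses: substituting $e(C_1) = x$ and cancelling $x^2$ leaves $e(C_2)^3 = e(B)^3$, and uniqueness of cube roots in $\mathbb{Z}[x]$ (from $p^3=q^3$ we get $(p-q)(p^2+pq+q^2)=0$, and the second factor is a sum of squares vanishing only when $p=q=0$) gives $e(C_2) = e(B)$, as required. The genuine content is concentrated in the choice of order rather than degree as the governing invariant: a degree comparison alone yields only $\max(2\dege(C_1), 3\dege(C_2)) = 3\,\dege(B)$, which does not constrain $C_1$, whereas at the level of orders the summand $e(C_1)^2$ is forced to have order $2$, and $e(x)^2 = x^2$ is the only square of an $f$-expression attaining order as low as $2$, every nontrivial expression producing order at least $4$.
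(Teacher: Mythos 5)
Your proof is correct and follows essentially the same route as the paper's: both arguments compare lowest-order terms, observing that $3\,\orde(C_2) \ge 3$ forces the $x^2$ on the right to arise from $e(C_1)^2$, that any $f$-expression other than $x$ has order at least $2$ (hence square of order at least $4$), and then cancel to get $e(C_2)^3 = e(B)^3$. Your version merely makes explicit the bookkeeping the paper leaves implicit (non-negativity of coefficients, the $\min$/$\max$ formulas for order and degree, and uniqueness of cube roots in $\mathbb{Z}[x]$).
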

\begin{proof}
We have $e(f(x,B)) = x^2 + e(B)^3 = e(C_1)^2 + e(C_2)^3$. Notice that $3\,\orde(C_2) \ge 3$, so $x^2$ must arise in the expansion of $e(C_1)^2$. This forces $C_1 = x$, because otherwise every term that arises in the expansion of $e(C_1)^2$ will have powers at least as high as $4$. Then cancellation gives us $e(C_2)^3 = e(B)^3$, which forces $e(C_2) = e(B)$.
\end{proof}

\begin{prop}
\label{appending x right}
If $e(f(C_1,C_2)) = e(f(A,x))$ where $C_1$, $C_2$ are $f$-expressions, then we must have $e(C_1) = e(A)$ and $C_2 = x$.
\end{prop}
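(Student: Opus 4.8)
The plan is to mirror the strategy of Proposition~\ref{appending x left}, expanding both sides as $e(f(C_1,C_2)) = e(C_1)^2 + e(C_2)^3$ and $e(f(A,x)) = e(A)^2 + x^3$, and then extracting structural information from the \emph{order} (lowest-degree term) of these polynomials. First I would record two facts the argument rests on. One: every induced polynomial $e(E)$ has nonnegative integer coefficients (immediate by induction on the structure of $E$, since squaring, cubing, and adding all preserve nonnegativity), so no cancellation ever occurs when one forms $e(C_1)^2 + e(C_2)^3$ or $e(A)^2 + x^3$; consequently the order of such a sum is the minimum of the orders of its summands, and $\orde$ of a power $e(E)^k$ equals $k\,\orde(E)$. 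Two: an $f$-expression $E$ satisfies $\orde(E) = 1$ if and only if $E = x$, since any composite $E = f(E_1,E_2)$ has $\orde(E) = \min(2\,\orde(E_1), 3\,\orde(E_2)) \ge 2$. This second fact is exactly the observation already used implicitly in Proposition~\ref{appending x left}.

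Next I would compute the order of the right-hand side, which is $\min(2\,\orde(A), 3)$, and split on its value. If $A \ne x$, then $\orde(A) \ge 2$, so $e(A)^2$ has order at least $4$ and the term $x^3$ survives uncancelled; hence both sides have order exactly $3$, giving $\min(2\,\orde(C_1), 3\,\orde(C_2)) = 3$. The key step is the parity observation that $2\,\orde(C_1)$ is even and therefore cannot equal $3$, which forces $3\,\orde(C_2) = 3$, i.e. $\orde(C_2) = 1$; by the second fact above this yields $C_2 = x$. Cancelling the common $x^3$ then leaves $e(C_1)^2 = e(A)^2$, and since both are squares of polynomials with positive leading coefficients, $e(C_1) = e(A)$, as required. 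The remaining case $A = x$ follows the template of Proposition~\ref{appending x left}: the right-hand side is $x^2 + x^3$, whose lowest term $x^2$ cannot come from $e(C_2)^3$ (order $\ge 3$) and so must come from $e(C_1)^2$, forcing $\orde(C_1) = 1$, hence $C_1 = x$ and $e(C_1) = x = e(A)$; cancelling gives $e(C_2)^3 = x^3$, so $e(C_2) = x$ and $C_2 = x$.

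I expect the main obstacle to be the case $A \ne x$. Unlike in Proposition~\ref{appending x left}, where $x^2$ is uniformly the lowest-degree term, here $x^3$ is the lowest-degree term of $e(f(A,x))$ when $A \ne x$ but becomes its \emph{leading} term when $A = x$, so no single extremal-term argument handles all $A$ at once and the case split seems genuinely necessary. The crux of the harder case is the parity trick: the even quantity $2\,\orde(C_1)$ cannot match the odd order $3$ of the right-hand side, which is what cleanly pins the cube $e(C_2)^3$ to the lowest-degree behaviour and forces $\orde(C_2) = 1$ exactly (rather than merely bounding it), thereby collapsing $C_2$ to $x$ and reducing the remaining equation to a comparison of two squares.
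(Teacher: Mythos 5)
Your proof is correct and takes essentially the same route as the paper: both arguments pin the $x^3$ term to $e(C_2)^3$ (a square $e(C_1)^2$ can never supply it), conclude $C_2 = x$, and cancel to get $e(C_1) = e(A)$. One small correction to your closing remark: the case split on $A$ is not actually necessary, since the paper handles all $A$ uniformly by splitting on $C_1$ instead --- either $C_1 = x$, so $e(C_1)^2 = x^2$, or $C_1 \ne x$, so $e(C_1)^2$ has order at least $4$; in either case $e(C_1)^2$ has no $x^3$ term, which is your parity observation in different clothing.
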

\begin{proof}
We have $e(f(A,x)) = e(A)^2 + x^3 = e(C_1)^2 + e(C_2)^3$. If $C_1 = x$, then $e(C_1)^2 = x^2$. If $C_1 \ne x$, then $\orde(C_1) \ge 2$, so $e(C_1)^2$ has order at least 4. Thus, $x^3$ must arise in the expansion of $e(C_2)^3$. Therefore, we have $C_2 = x$ and it follows from cancellation that $e(C_1)^2 = e(A)^2$, so $e(C_1) = e(A)$.
\end{proof}

\begin{rem}
\label{left right multiple}
Actually, the arguments in the proofs of the previous two propositions also apply if $f(x,B)$ in Proposition \ref{appending x left} and $f(A,x)$ in Proposition \ref{appending x right} are instead multiple-variable $f$-expressions. For example, the same arguments can be applied, repeatedly, to show that $f$-expressions such as $f(f(x,f(y,z)),y)$ are $e$-isolated. In effect, this settles a special case of Conjecture \ref{single multiple conjecture}.
\end{rem}

\begin{prop}
\label{appending f(x,x) left}
If $e(f(C_1,C_2)) = e(f(f(x,x),B))$ where $C_1$, $C_2$ are $f$-expressions, then we must have $C_1 = f(x,x)$ and $e(C_2) = e(B)$.
\end{prop}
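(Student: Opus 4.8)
The plan is to expand both sides using $e(f(x,x)) = x^2 + x^3$, so that the hypothesis becomes
\[
e(C_1)^2 + e(C_2)^3 = x^4 + 2x^5 + x^6 + e(B)^3,
\]
and then to pin down the low-order behaviour of $e(C_1)^2$ and $e(C_2)^3$ by a term-by-term comparison. Throughout I would use three elementary facts implied by the earlier material: every induced polynomial has nonnegative coefficients (by induction on the structure of the $f$-expression); every $f$-expression $A$ satisfies $\orde(A) \ge 1$, with $\orde(A) = 1$ exactly when $A = x$ (any compound $f$-expression has order at least $2$); and consequently $e(C_1)^2$ has even order $2\,\orde(C_1)$ while $e(C_2)^3$ and $e(B)^3$ have orders divisible by $3$.

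First I would compare the coefficient of $x^3$ on the two sides. On the left, $e(C_1)^2$ can never contribute an $x^3$ term: its order is even and at least $2$, and if that order is $2$ then $C_1 = x$ and $e(C_1)^2 = x^2$, while if the order is at least $4$ there is no $x^3$ term at all. Hence the coefficient of $x^3$ on the left equals $[x^3]\,e(C_2)^3$, which is $1$ if $C_2 = x$ and $0$ otherwise. On the right, $(x^2+x^3)^2$ contributes nothing in degree $3$, so the coefficient of $x^3$ equals $[x^3]\,e(B)^3$, which is $1$ if $B = x$ and $0$ otherwise. This yields the clean dichotomy $C_2 = x \iff B = x$, which splits the argument into two cases.

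In the case $C_2 = B = x$ the cubes cancel immediately, leaving $e(C_1)^2 = (x^2+x^3)^2$; since induced polynomials have nonnegative coefficients, taking square roots gives $e(C_1) = x^2 + x^3 = e(f(x,x))$, and the $e$-isolation of $f(x,x)$ noted earlier forces $C_1 = f(x,x)$, while $e(C_2) = x = e(B)$. In the remaining case $C_2, B \ne x$, both $e(C_2)^3$ and $e(B)^3$ have order at least $6$, so comparing all terms of degree at most $5$ isolates $e(C_1)^2 = x^4 + 2x^5 + (\text{terms of degree} \ge 6)$; reading off the coefficients of $x^4$ and $x^5$ gives $e(C_1) = x^2 + x^3 + (\text{terms of degree} \ge 4)$. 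To upgrade this to the exact equality $e(C_1) = x^2+x^3$, I would peel off the top layer of $C_1$: since $\orde(C_1) = 2 > 1$ we may write $C_1 = f(D_1,D_2)$, and the order-$2$ condition forces $\orde(D_1)=1$, hence $D_1 = x$ and $e(C_1) = x^2 + e(D_2)^3$; then $e(D_2)^3 = x^3 + (\text{degree} \ge 4)$ has order exactly $3$, forcing $D_2 = x$ and therefore $e(C_1) = x^2+x^3$ with no higher terms, i.e. $C_1 = f(x,x)$. Cancelling then gives $e(C_2)^3 = e(B)^3$, whence $e(C_2) = e(B)$ because cubing is injective on real polynomials.

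I expect the main obstacle to be this last case: after the degree-$\le 5$ comparison one only learns the initial segment of $e(C_1)$, and a priori $e(C_1)$ could carry spurious higher-degree terms that are hidden inside $e(B)^3$ on the other side, beyond the reach of the finite comparison. The device that defeats this is the recursive peeling of $C_1 = f(D_1,D_2)$, which converts the approximate information $e(C_1) = x^2+x^3+\cdots$ into the exact identification $C_1 = f(x,x)$ without ever having to analyze $e(B)$ in detail; everything else reduces to bookkeeping about orders and the injectivity of squaring and cubing on polynomials with nonnegative leading coefficients.
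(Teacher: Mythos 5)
Your proof is correct and takes essentially the same route as the paper's: in the main case both arguments pin the terms $x^4$ and $2x^5$ to the expansion of $e(C_1)^2$ via order considerations, peel off $C_1 = f(x,C_3)$, force $C_3 = x$, and then cancel the cubes to get $e(C_2) = e(B)$. The only differences are cosmetic: the paper dispatches the case $B = x$ by citing Proposition \ref{appending x right} instead of arguing directly, while your $x^3$-coefficient comparison giving $C_2 = x \iff B = x$ makes explicit a step (that $C_2 \ne x$ when $B \ne x$) which the paper asserts without comment.
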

\begin{proof}
We have $e(C_1)^2 + e(C_2)^3 = e(f(x,x))^2 + e(B)^3 = (x^2 + x^3)^2 + e(B)^3 = x^4 + 2x^5 + x^6 + e(B)^3$. If $B = x$, then the conclusion follows by Proposition \ref{appending x right}. Suppose $B$ is not $x$. Notice that the terms $x^4$ and $2x^5$ must arise in the expansion of $e(C_1)^2$ because $3\,\orde(C_2) \ge 6$ as $C_2 \ne x$. Since $x^4 = x^2 \cdot x^2$, we must have $e(C_1) = e(f(x,C_3)) = x^2 + e(C_3)^3$ where $C_3$ is another $f$-expression. Considering $(x^2 + e(C_3)^3)^2$ and the fact that $x^5 = x^2 \cdot x^3$ show that the expansion of $e(C_3)^3$ contains the term $x^3$, so $e(C_3)^3 = x^3$ and thus $C_3 = x$. Since $e(C_1) = x^2 + x^3$, it follows again by cancellation that $e(C_2)^3 = e(B)^3$ and so $e(C_2) = e(B)$. Since $f(x,x)$ is $e$-isolated, we have $C_1 = f(x,x)$.
\end{proof}

As can be seen, the above three propositions were established with an argument that works ``outside-in'' in the sense that it depends only on the $x$ and $f(x,x)$ that are being appended to $A$, $B$ by $f$, while $A$ and $B$ can be completely arbitrary. This argument is difficult to apply for $f$-expressions such as $f(C,B)$, where $B$ is arbitrary and $C$ is an $f$-expression such that $\dege(C) > \dege(f(x,x))$. Below we will introduce an argument that works, in some sense, ``inside-out.''

Let $f(A,B)$ be an $f$-expression. We will be examining the leading terms of the polynomial $e(f(A,B))$ by looking at the subexpressions from which they arise. For instance, $e(f(x,f(x,x))) = x^2 + e(f(x,x))^3 = x^2 + (x^2 + x^3)^3$ and we see that the term with the degree of the polynomial arises from the $f(x,x)$ by the product $(x^3)^3 = x^9$. The next lemma will show that the fact that this highest degree term arises from a subexpression $f(x,x)$ is very generally true.
\begin{lem}
\label{cores lemma}
Every summand contributing to the highest degree term of $e(f(A,B))$ must arise from an occurrence of $f(x,x)$ contained in $f(A,B)$, on expanding $e(f(A,B))$ in powers of $x$. 
\end{lem}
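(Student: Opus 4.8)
The plan is to induct on the size (the number of applications of $f$) of the $f$-expression $E = f(A,B)$, exploiting the degree recursion $\dege(f(P,Q)) = \max(2\,\dege(P),\, 3\,\dege(Q))$, which is immediate from $e(f(P,Q)) = e(P)^2 + e(Q)^3$. Two elementary facts will do the real work. First, every $f$-expression other than $x$ has $\dege \ge 3$, since $\dege(f(P,Q)) \ge 3\,\dege(Q) \ge 3$. Second, and equivalently, $\dege(C) = 1$ forces $C = x$. These are exactly the inequalities that prevent a highest-degree term from bottoming out at a bare $x$.

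First I would pin down the phrase \emph{summand contributing to the highest degree term}. Expanding $e(f(A,B)) = e(A)^2 + e(B)^3$ and then recursively expanding each factor down to the leaves, every monomial of the top degree $\dege(f(A,B))$ is a product of leading monomials selected from the children that realize the maximum in the degree recursion. Tracing such a summand thus means descending, at each $f$-node, into the square-branch $A$ when $2\,\dege(A) \ge 3\,\dege(B)$ or into the cube-branch $B$ when $3\,\dege(B) \ge 2\,\dege(A)$, always keeping a leading monomial; the claim to be proved is that every factor so produced is a cubed right-hand $x$ sitting inside some occurrence of $f(x,x)$.

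The heart of the argument is the inductive step, where $E = f(A,B) \ne f(x,x)$. Suppose the top degree is realized through the cube branch, i.e.\ $3\,\dege(B) \ge 2\,\dege(A)$. I claim $B \ne x$: otherwise $\dege(B) = 1$ would give $3 \ge 2\,\dege(A)$, forcing $\dege(A) = 1$, hence $A = x$ and $E = f(x,x)$, against the hypothesis. So $B$ is a proper $f$-expression, the inductive hypothesis applies to it, and it locates the originating $f(x,x)$ inside $B$, and a fortiori inside $E$. If instead the top degree is realized through the square branch, so $2\,\dege(A) \ge 3\,\dege(B) \ge 3$, then $\dege(A) \ge 2$, whence $A \ne x$ and the inductive hypothesis applies to $A$. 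The base case $E = f(x,x)$ is immediate, since $e(f(x,x)) = x^2 + x^3$ has leading term $x^3$, cubed out of the right-hand $x$ of $f(x,x)$ itself.

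The hard part will be bookkeeping rather than inequalities. Because squaring and cubing duplicate entire subtrees, a single top-degree summand is really a product of two or three leading monomials, each of which must be traced back to its \emph{own} occurrence of $f(x,x)$, counted with multiplicity; I would need to state carefully what it means for a summand to \emph{arise from} a given occurrence so that this multiplicity is handled honestly. The one genuine subtlety in the case analysis is to confirm that the descent can terminate only by cubing a right-hand $x$ whose sibling is forced to be $x$ as well, and never by squaring a left-hand $x$ --- the latter being ruled out because at any node $f(x,Q)$ one has $2\,\dege(x) = 2 < 3 \le 3\,\dege(Q)$, so a squared left leaf can never lie on the maximizing branch.
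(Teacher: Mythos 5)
Your proof is correct, but it is organized quite differently from the paper's. The paper argues leaf-upward and by contradiction: it fixes a single occurrence $x_d$ of $x$ whose square or cube divides a copy of the highest-degree monomial, inspects only the immediate parent node $f(x_d,C)$ or $f(D,x_d)$, and notes that if the sibling were not $x$, then exchanging the leaf's contribution ($x_d^2$ or $x_d^3$) for the sibling's contribution ($e(C)^3$ of degree $\ge 9$, or $e(D)^2$ of degree $\ge 6$) would produce, in the expansion of $e(f(A,B))$, a term of degree strictly greater than $\dege(f(A,B))$ --- a contradiction; hence every contributing occurrence of $x$ sits inside an occurrence of $f(x,x)$. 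You instead argue root-downward by structural induction, using the recursion $\dege(f(P,Q)) = \max(2\,\dege(P),\,3\,\dege(Q))$ to characterize top-degree summands as products of leading monomials pulled along maximizing branches, and then using the very same two inequalities ($2 < 3\,\dege(Q)$ always, and $3 < 2\,\dege(P)$ unless $P = x$) to show the descent can bottom out only at the cubed right-hand $x$ of an $f(x,x)$. So the degree comparisons doing the work are identical; the difference is architecture. The paper's version is shorter, needs no induction, and directly delivers the pointwise fact that the notion of core rests on (each contributing occurrence of $x$ is the cubed child of an $f(x,x)$). Your version makes explicit two things the paper leaves tacit: that positivity of coefficients (no cancellation) is what lets one read off top-degree terms branch by branch, and that squaring and cubing duplicate subtrees, so a single top-degree summand is a product of two or three leading monomials, possibly traceable to \emph{different} occurrences of $f(x,x)$ (as happens in $f(x,f(f(x,f(x,x)),f(f(x,x),x)))$, whose top summands mix factors from two cores). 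The ``bookkeeping'' issue you flag at the end is real but is a matter of formulating what ``arises from'' means, not a logical gap; the paper sidesteps it precisely by quantifying over one occurrence of $x$ at a time rather than over whole summands, and adopting that per-occurrence formulation would let you close out your write-up cleanly.
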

\begin{proof}
Consider the polynomial expansion of $e(f(A,B))$. Suppose $\dege(f(A,B)) = 2^m3^n$ for some $m, n \in \mathbb{N} \cup \{0\}$. Then the highest degree term of $e(f(A,B))$ is $px^{2^m3^n}$ for some $p \in \mathbb{N}$. Let $x_d$ denote an occurrence of $x$ in $f(A,B)$ such that at least one of the $p$ copies of $x^{2^m3^n}$ (we will denote this copy by $[x^{2^m3^n}]_\alpha$) in the expansion of $e(f(A,B))$ contains at least one factor of this occurrence of $x$, i.e. $[x^{2^m3^n}]_\alpha = x_d^2 \cdot x^{2^m3^n - 2}$ or $[x^{2^m3^n}]_\alpha = x_d^3 \cdot x^{2^m3^n - 3}$. Suppose that $x_d$ is contained in a subexpression $f(x_d,C)$ or $f(D,x_d)$ where $C \neq x$ and $D \neq x$. Considering the product $e(C)^3 \cdot x^{2^m3^n - 2}$ that arises from $f(x_d,C)$ and the product $e(D)^2 \cdot x^{2^m3^n - 3}$ that arises from $f(D,x_d)$, we see that in the expansion of $e(f(A,B))$ any occurrence of $x$ contained in $C$ or in $D$ will lead to a term with a power higher than $2^m3^n$. This is a contradiction, so $x_d$ must be contained in an $f(x,x_d)$ in $f(A,B)$.
\end{proof}
We will call an occurrence of $f(x,x)$ in the $f$-expression $f(A,B)$ a \textbf{core} of $f(A,B)$ if this occurrence of $f(x,x)$ gives rise to a summand contributing to the highest degree term of $e(f(A,B))$. Whenever an occurrence $f(x,x)'$ of $f(x,x)$ in $f(A,B)$ is a core of $f(A,B)$, it is clear that $e(f(x,x)')^{\frac{1}{3}\dege(f(A,B))} = (x^2 + x^3)^{\frac{1}{3}\dege(f(A,B))}$ must be a term of $e(f(A,B))$.

We define inductively what it means to \textbf{develop} an $f$-expression about a core:
\begin{enumerate}
\item
Start with $f(x,x)$ and label it a core of the $f$-expression to be developed. Then $f(x,x)$ is the $f$-expression at the first stage of the development.
\item
Let $A$ be the $f$-expression at the $n$th stage of the development where $n \ge 1$. Then the $f$-expression at the $(n + 1)$st stage of the development is either $f(A,C)$ where $C$ is an $f$-expression such that $3\,\dege(C) \le 2\,\dege(A)$ or $f(C,A)$ where $C$ is an $f$-expression such that $2\,\dege(C) \le 3\,\dege(A)$.
\end{enumerate}

Any $f$-expression can be developed inductively in the above manner, though the development may not be unique. For example, we can develop $f(x,f(x,x))$ only by the sequence of steps $f(x,x)$, $f(x,f(x,x))$ while we can develop $f(f(x,f(x,x)),f(f(x,x),x))$ by the sequence $f(x,x)$, $f(x,f(x,x))$, $f(f(x,f(x,x)),f(f(x,x),x))$ or by the sequence $f(x,x)$, $f(f(x,x),x)$, $f(f(x,f(x,x)),f(f(x,x),x))$. However, every development of an $f$-expression whose induced polynomial has degree $2^m3^n$ must consist of $m + n$ stages. 

Suppose that $C_1$ and $C_2$ are distinct cores of the $f$-expression $A$. Then we can find a subexpression $f(D_1,D_2)$ of $A$ such that either $D_1$ contains $C_1$ and $D_2$ contains $C_2$ or vice versa. Since $C_1$, $C_2$ are both cores, we must have $D_1 \ne D_2$ and $2\,\dege(D_1) = 3\,\dege(D_2)$. Since $f(D_1,D_2)$ is the $f$-expression at the $n$th stage of the development of $A$ about $C_1$ and the $f$-expression at the $n$th stage of the development of $A$ about $C_2$ for some $n \in \mathbb{N}$, we see that the development of $A$ about $C_1$ and the development of $A$ about $C_2$ differ at the $(n - 1)$st stage, i.e. these two developments are distinct. This analysis shows that an $f$-expression has a unique core whenever it has a unique development. Therefore, an $f$-expression has a unique development if and only if this $f$-expression has a unique core. Note that a ``development'' is defined as a property of an $f$-expression, not of a polynomial. So far as we know, for a given $f$-expression, the properties of having a unique core and of being $e$-isolated are independent of one another.

It is easy to see that an $f$-expression corresponding to a non-monic polynomial does not have a unique core. However, the number of cores of an $f$-expression do not necessarily equal the leading coefficient of the induced polynomial; consider $f(x,f(f(x,f(x,x)),f(f(x,x),x)))$, which has two cores while the polynomial it induces has a leading coefficient of 8. Moreover, it is easy to prove by induction the following

\begin{lem}
\label{non-monic condition}
Suppose $A \in \term(f;x)$ and $e(A)$ is non-monic. Then we have $\dege(A) = 2^p3^q$ where $p \ge 1$ and $q \ge 2$. 
\end{lem}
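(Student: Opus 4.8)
The plan is to argue by induction on the structure of $A$, exploiting the recursion $\dege(f(B,C)) = \max(2\dege(B),3\dege(C))$ that follows from $e(f(B,C)) = e(B)^2 + e(C)^3$. Writing $\lead(A)$ for the leading coefficient of $e(A)$, the same recursion tells us that $\lead(f(B,C)) = \lead(B)^2$ when $2\dege(B) > 3\dege(C)$, that $\lead(f(B,C)) = \lead(C)^3$ when $2\dege(B) < 3\dege(C)$, and that $\lead(f(B,C)) = \lead(B)^2 + \lead(C)^3$ when $2\dege(B) = 3\dege(C)$. Before the main induction I would record one auxiliary fact: every $f$-expression of degree greater than $1$ has degree divisible by $3$. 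This follows from a short induction, using that $\dege(C) \ge 1$ forces $3\dege(C) \ge 3$, so any $f(B,C)$ has degree at least $3$; if its degree equals $3\dege(C)$ it is visibly divisible by $3$, while if it equals $2\dege(B) > 3\dege(C) \ge 3$ then $\dege(B) > 1$ and the inductive hypothesis makes $\dege(B)$, hence $2\dege(B)$, divisible by $3$. In particular no pure power of $2$ exceeding $1$ occurs as the degree of an $f$-expression.

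For the main induction, the base case $A = x$ is vacuous since $e(x) = x$ is monic. Write $A = f(B,C)$ and split on the comparison of $2\dege(B)$ and $3\dege(C)$. If $2\dege(B) > 3\dege(C)$, then $e(A)$ non-monic forces $\lead(B)^2 > 1$, hence $e(B)$ non-monic; the inductive hypothesis gives $\dege(B) = 2^{p'}3^{q'}$ with $p' \ge 1$, $q' \ge 2$, and then $\dege(A) = 2\dege(B) = 2^{p'+1}3^{q'}$ is of the required form. The case $2\dege(B) < 3\dege(C)$ is symmetric: $e(C)$ must be non-monic, and $\dege(A) = 3\dege(C) = 2^{p'}3^{q'+1}$ again has $p' \ge 1$ and $q'+1 \ge 2$.

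The main work, and the one case that does not simply reduce to the inductive hypothesis, is the ``collision'' $2\dege(B) = 3\dege(C)$, where $e(A)$ is automatically non-monic because $\lead(A) = \lead(B)^2 + \lead(C)^3 \ge 2$. Here I would not appeal to the inductive hypothesis at all but instead extract the exponents directly: from $2\dege(B) = 3\dege(C)$ the quantity $\dege(C)$ must be even, so writing $\dege(C) = 2^c3^d$ we get $c \ge 1$ and hence $\dege(C) \ge 2 > 1$. The auxiliary fact then forces $3 \mid \dege(C)$, i.e. $d \ge 1$, and therefore $\dege(A) = 3\dege(C) = 2^c3^{d+1}$ with $c \ge 1$ and $d+1 \ge 2$, exactly as claimed. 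The only genuine obstacle is thus the collision case, and within it the sole nontrivial point is ruling out $\dege(C)$ being a pure power of $2$; this is precisely what the auxiliary divisibility-by-$3$ observation supplies. As a sanity check, the smallest collision occurs at $\dege(B) = 9$, $\dege(C) = 6$, giving the non-monic expression of degree $2 \cdot 3^2 = 18$, so the bounds $p \ge 1$, $q \ge 2$ are sharp.
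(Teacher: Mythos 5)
Your proof is correct and takes essentially the same route as the paper's: structural induction on $A = f(B,C)$, splitting on whether $2\,\dege(B)$ and $3\,\dege(C)$ collide, with the non-collision cases reduced to the inductive hypothesis applied to the dominant (hence non-monic) subexpression and the collision case handled by divisibility. The only difference is that you explicitly state and prove the auxiliary fact that every $f$-expression other than $x$ has degree divisible by $3$, a step the paper's proof uses but asserts without justification (``the latter because $\dege(C)$ is divisible by $3$'').
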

\begin{proof}
Let $A = f(B,C)$. If $2\,\dege(B) = 3\,\dege(C)$, then neither $B$ nor $C$ is $x$, so the value of $\dege(f(B,C)) = 2\,\dege(B) = 3\,\dege(C)$ will be divisible by $2$ and $3^2$, the latter because $\dege(C)$ is divisible by 3. If $2\,\dege(B) \ne 3\,\dege(C)$, then whichever of $B$ or $C$ contributes the higher degree term must be non-monic, and in that case we may assume inductively that either $\dege(B)$ or $\dege(C)$ satisfies the conclusion of the lemma.
\end{proof}

We shall call an $f$-expression $f(A,B)$ \textbf{disjoint} if $2\,\dege(A) < 3\,\orde(B)$ or if $3\,\dege(B) < 2\,\orde(A)$. $f(A,B)$ is called \textbf{hereditarily disjoint} if it is disjoint at every stage of its development about some core. We also consider $x$ to be (vacuously) hereditarily disjoint.

\begin{prop}
\label{hereditarily disjoint classification}
An $f$-expression $A$ is hereditarily disjoint if and only if it is either 
\begin{enumerate}
	\item $x$
	\item $f(x,U)$ for $U$ a hereditarily disjoint $f$-expression
	\item $f(U,x)$ for $U$ a hereditarily disjoint $f$-expression with $\orde(U) \ge 2$
	\item $f(f(x,x),U)$ for $U$ a hereditarily disjoint $f$-expression with $\orde(U) \ge 3$
\end{enumerate}
In these four cases, $\orde(A) = 1$, $2$, $3$, $4$ respectively, and it is easy to see that $A$ has a unique core in all cases except $A=x$.
\end{prop}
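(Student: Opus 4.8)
The plan is to prove both directions at once and to read off the order values $1,2,3,4$ as a by-product, because the real engine of the argument is a fact that is \emph{not} hypothesized but must be extracted from the characterization itself: a hereditarily disjoint $f$-expression has order at most $4$. Since disjointness at the last stage of a development only becomes usable once one has an \emph{a priori} ceiling on the orders of the subexpressions, the characterization and this order bound have to be established together in a single induction. Throughout I would use the two formulas $\dege(f(P,Q)) = \max(2\dege(P),3\dege(Q))$ and $\orde(f(P,Q)) = \min(2\orde(P),3\orde(Q))$, which hold with no cancellation because every induced polynomial has a positive leading (and a positive lowest) coefficient.

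First I would dispose of the two direct parts. Substituting the four forms into the order formula, using the stated order bounds on $U$, gives $\orde(A)=1,2,3,4$ respectively (e.g.\ in case $4$, $\orde(A)=\min(2\orde(f(x,x)),3\orde(U))=\min(4,3\orde(U))=4$ exactly because $\orde(U)\ge 3$). For the ``if'' direction I would take the development of $U$ about its core (or the one-stage development $f(x,x)$ when $U=x$) and adjoin the outer symbol: left-adjoin $x$ in case $2$, right-adjoin $x$ in case $3$, left-adjoin $f(x,x)$ in case $4$. In each case the legality of this final development step and its disjointness reduce, via the order hypothesis on $U$, to elementary numerical checks; for instance in case $4$ the stage $f(f(x,x),U)$ is a legal step since $2\dege(f(x,x))=6\le 3\dege(U)$, and it is disjoint since $6<3\orde(U)$ precisely when $\orde(U)\ge 3$. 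Every earlier stage is disjoint because $U$ is hereditarily disjoint.

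The substance is the ``only if'' direction, which I would run by strong induction on $\dege(A)$. The base cases are $A=x$ (case $1$) and $A=f(x,x)$, which is its own core and hence vacuously hereditarily disjoint (case $2$ with $U=x$). Otherwise $A=f(B,C)$ and the chosen core is a proper subexpression lying inside $B$ or inside $C$; the containing factor is then hereditarily disjoint and of strictly smaller degree, so by the inductive hypothesis it is one of the four forms and in particular has order $\le 4$. If the core lies in $B$, the last development step right-adjoins $C$, so $3\dege(C)\le 2\dege(B)$; this kills the disjointness alternative $2\dege(B)<3\orde(C)$ and forces $3\dege(C)<2\orde(B)\le 8$, whence $\dege(C)\le 2$. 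Since every $f$-expression other than $x$ has degree at least $3$, this gives $C=x$, and disjointness then reads $3<2\orde(B)$, i.e.\ $\orde(B)\ge 2$, which is exactly case $3$. Symmetrically, if the core lies in $C$ the last step left-adjoins $B$, forcing $2\dege(B)<3\orde(C)\le 12$, so $\dege(B)\le 5$; since the only $f$-expression degrees below $6$ are $1$ and $3$, either $B=x$ (case $2$) or $B=f(x,x)$, the unique degree-$3$ expression, in which case disjointness gives $6<3\orde(C)$, i.e.\ $\orde(C)\ge 3$, which is case $4$.

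The two small arithmetic inputs — that the only $f$-expression of degree $\le 2$ is $x$, and the only one of degree $3$ is $f(x,x)$ — follow at once from $\dege(f(P,Q))=\max(2\dege(P),3\dege(Q))\ge 3$. The uniqueness of the core in the nontrivial cases then falls out of the same induction: in each of cases $2$, $3$, and $4$ the order bound on $U$ makes $3\dege(U)$ strictly exceed the degree contributed by the other factor, so $\dege(A)=3\dege(U)$ and the top-degree term of $e(A)$ comes entirely from $e(U)^3$; hence the cores of $A$ are exactly the cores of $U$, which are unique by induction, with $f(x,x)$ as the base case. The one genuine obstacle is conceptual rather than computational: recognizing that the final-stage disjointness inequality is inert without the ceiling $\orde\le 4$ on the inner factor, and that this ceiling is itself the order half of the statement, so that nothing short of a joint induction on structure and order will close the argument.
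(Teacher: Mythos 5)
Your proof is correct and follows essentially the same route as the paper's: an induction in which the inner (core-containing) factor is assumed to be one of the four forms, so its order is at most $4$, which via the disjointness inequality forces the appended factor to have degree less than $6$ (left adjunction) or less than $3$ (right adjunction) and hence to be $f(x,x)$ or $x$, with disjointness then pinning down the order conditions. The only differences are cosmetic — you induct on $\dege(A)$ rather than on the stage of the development, and you spell out the ``if'' direction and the unique-core claim that the paper leaves as immediate (note the trivial slip that in case $3$ the dominant contribution is $2\,\dege(U)$ from $e(U)^2$, not $3\,\dege(U)$) — so the two arguments are in substance identical.
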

\begin{proof}
If $A$ belongs to one of the above four cases, then $A$ is hereditarily disjoint by definition. Now assume that $A$ is hereditarily disjoint, we will show that $A$ belongs to one of the above four cases. Suppose that $A \ne x$ and that for each core there are $n$ stages in the development of $A$ about that core. Suppose that for all $i \le n - 1$ the $f$-expression at the $i$th stage of the development of $A$ about each of its cores belongs to one of the above four cases. Then either $A = f(B,C)$ where $3\,\orde(C) > 2\,\dege(B)$ or $A = f(C,B)$ where $2\,\orde(C) > 3\,\dege(B)$. By the inductive hypothesis, we have $\orde(C) \le 4$, which forces $\dege(B) < 6$ for the case $A = f(B,C)$ and $\dege(B) < 3$ for the case $A = f(C,B)$. Thus, $A = f(B,C)$ implies $B = f(x,x)$ or $B = x$, and $A = f(C,B)$ implies $B = x$. This completes the induction.
\end{proof}

\begin{notn}
\label{ellipses form}
Let $A, B \in \term(f;x)$. Whenever we denote $A$ by $f(\ldots B \ldots)$, we mean that $B$ is a subexpression of $A$ and $B$ contains a core of $A$.
\end{notn}

Suppose $A := f( \ldots f(C,B) \ldots )$ is an $f$-expression of degree $2^p3^q$ where $B$ contains a core of $A$. Suppose $3\,\dege(B) = 2^m3^n$ and $2\,\dege(C) = 2^i3^j$. Define the \textbf{degree-gap between $C$ and $B$} to be the positive integer \begin{equation}\label{deg gap} \dgap(C,B) := 3\,\dege(B) - 2\,\dege(C) = 2^m3^n - 2^i3^j. \end{equation} In the $\dgap(-,-)$ notation we use, we will ignore the order of $B$ and $C$. In other words, we could also have written (\ref{deg gap}) as ``$\dgap(B,C) := 3\,\dege(B) - 2\,\dege(C) = 2^m3^n - 2^i3^j$'' (as we have already specified that $B$ contains a core of $A$). Now consider the expansion of $(e(C)^2 + e(B)^3)^{2^{p-m}3^{q-n}}$ and notice that the \textbf{highest degree monomial which can contain a factor coming from $C$ in the expansion of $e(A)$} is \begin{equation}\label{max term} \maxt_{A}(C) = x^{2^i3^j} \cdot (x^{2^m3^n})^{2^{p-m}3^{q-n}-1} = x^{2^i3^j + (2^m3^n)(2^{p-m}3^{q-n} - 1)}; \end{equation} here we are ignoring the coefficient of $x^{2^i3^j + (2^m3^n)(2^{p-m}3^{q-n} - 1)}$, as it is irrelevant at this point. The definition in (\ref{max term}) is relative to $A$ given, but we will abbreviate $\maxt_{A}(C)$ to $\maxt(C)$ where there is no danger of confusion. Of course, $B$ gives rise to the highest degree monomial $x^{2^p3^q} = x^{\dege(A)}$ of $A$. Notice that \begin{align*} \dege(A) - \deg(\maxt(C)) &= \deg((e(B)^3)^{2^{p-m}3^{q-n}}) - \deg(\maxt(C)) \\ &= 2^p3^q - (2^i3^j +(2^m3^n)(2^{p-m}3^{q-n} - 1)) \\ &= 2^m3^n - 2^i3^j \\ &= \dgap(C,B), \end{align*} so the degree-gap between $C$ and $B$ is \textbf{preserved in the expansion of $e(A)$}. This (and its analogue in the next paragraph) will be an important fact in Lemma \ref{gleftx} and Proposition \ref{e-isolated left appendage}, where we will prove that an $f$-expression is $e$-isolated by considering all possible developments that lead to an $f$-expression $e$-equivalent to the given one.

Similarly, in the opposite case, where $B := f( \ldots f(A,D) \ldots )$ is an $f$-expression of degree $2^p3^q$, $A$ contains a core of $B$, $2\,\dege(A) = 2^m3^n$, and $3\,\dege(D) = 2^i3^j$, we can define the \textbf{degree-gap between $A$ and $D$} to be the positive integer \begin{equation}\label{deg gap rev} \dgap(A,D) := 2\,\dege(A) - 3\,\dege(D) \end{equation} and notice that the \textbf{highest degree monomial which can contain a factor coming from $D$ in the expansion of $e(B)$} is \begin{equation}\label{max term rev} \maxt_{B}(D) = x^{2^i3^j + (2^m3^n)(2^{p-m}3^{q-n} - 1)}. \end{equation} Again, we will ignore the order of $A$ and $D$ in the $\dgap(-,-)$ notation we use, and we will abbreviate $\maxt_{B}(D)$ to $\maxt(D)$ where there is no danger of confusion. As before, we can observe that \[\dege(B) - \deg(\maxt(D)) = \dgap(A,D).\]

For an $f$-expression $A := f( \ldots B \ldots )$ where $\deg(e(B)) = 2^m3^{n-1}$, we say that $B$ is \textbf{$e$-isolated with respect to} $A$ if, for every development of every $f$-expression $e$-equivalent to $A$, we obtain the $f$-expression $B$ (not merely some $f$-expression $e$-equivalent to $B$) at the $(m + (n - 1))$st stage of the development. Note that if $B$ is $e$-isolated with respect to $A$, then $B$ must be $e$-isolated. The converse is not true, because even though $f(x,f(x,x))$ is $e$-isolated, it is not $e$-isolated with respect to $f(f(x,f(x,x)),f(f(x,x),x))$. Thus, $B$ being $e$-isolated with respect to $A$ is a stronger statement than $B$ being $e$-isolated. Note also that Lemma \ref{cores lemma} is equivalent to the statement that $f(x,x)$ is $e$-isolated with respect to every $f$-expression other than $x$.

\begin{mydef}
\label{agreement}
Let $A \in \term(f;x)$ and let $D_1(A)$, $D_2(A)$ denote two developments of $A$, not necessarily distinct. We shall say that $D_1(A)$, $D_2(A)$ \textbf{agree} at the $n$th stage if there exists $\hat{A} \in \term(f;x)$ such that $\hat{A}$ is the $f$-expression at the $n$th stage of both $D_1(A)$ and $D_2(A)$.
\end{mydef}

\begin{notn}
\label{stage of development}
Given $A \in \term(f;x)$, we will write $A^{[n]}$ for the $f$-expression at the $n$th stage of the development of $A$, provided that all developments of $A$ agree at the $n$th stage. Note that this notation refers only to developments of $A$, and not to developments of $f$-expressions $e$-equivalent to $A$, in contrast to the definition of ``$e$-isolated with respect to $A$'' in the paragraph preceding Definition \ref{agreement}.
\end{notn}

\begin{mydef}
\label{lexicographic ordering}
Let $p(x) = a_nx^n + a_{n-1}x^{n-1} +  \ldots  + a_1x + a_0$ and $q(x) = b_nx^n + b_{n-1}x^{n-1} +  \ldots  + b_1x + b_0$ be two polynomials with nonnegative coefficients. Suppose $p(x) \ne q(x)$, and let $m$ be the greatest integer such that $a_m \ne b_m$. Then we say that $p(x)$ is \textbf{lexicographically greater than} $q(x)$, denoted by $p(x) >_L q(x)$, if $a_m > b_m$.
\end{mydef}

\begin{lem}
\label{gleftx}
Let $A = f(\ldots f(x',B) \ldots)$ be an $f$-expression where $x' := x$ for the purpose of distinguishing it from the other occurrences of the variable $x$ in $A$, $\dege(A) = 2^p3^q$, and $B$ contains a core of $A$. Suppose that for every $f$-expression $C$ in the ellipses $( \ldots )$ of $A$ we have $\deg(\maxt(C)) \le \deg(\maxt(x'))$. Suppose there exists either an $f$-expression $\bar{A} = f(\ldots f(U,B) \ldots)$ such that $\dege(\bar{A}) = 2^p3^q$, $U \ne x$, and $B$ contains a core of $\bar{A}$ or an $f$-expression $\hat{A} = f(\ldots f(B,V) \ldots)$ such that $\dege(\hat{A}) = 2^p3^q$ and $B$ contains a core of $\hat{A}$. Then $e(A) <_L e(\bar{A})$ if $\bar{A}$ exists and $e(A) <_L e(\hat{A})$ if $\hat{A}$ exists.
\end{lem}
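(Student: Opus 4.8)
The plan is to exploit the fact that every induced polynomial $e(C)$ has non-negative integer coefficients (immediate by induction, since $e(f(C_1,C_2)) = e(C_1)^2 + e(C_2)^3$), so that lexicographic comparisons can be read off purely from where extra non-negative ``mass'' appears. Writing $d = \dege(B)$ and $N = \dege(A)/d = 2^p3^q/d$ (a positive integer, since $\dege(B) \mid \dege(A)$ as each development step multiplies the degree of the core-bearing part by $2$ or $3$), I would first isolate in each of $A$, $\bar{A}$, $\hat{A}$ the \emph{pure-$B$ part}: the contribution obtained by never selecting an appended summand at any stage of the development. Expanding $e(A)$ stage by stage via $e(f(W,P)) = e(W)^2 + e(P)^3$ and $e(f(P,W)) = e(P)^2 + e(W)^3$, one sees this pure-$B$ part equals $e(B)^N$ in all three: for $A = f(\ldots f(x',B) \ldots)$ and $\bar{A} = f(\ldots f(U,B) \ldots)$ it is $(e(B)^3)^{E} = e(B)^{3E}$ with $E = 2^p3^q/(3d)$, while for $\hat{A} = f(\ldots f(B,V) \ldots)$ it is $(e(B)^2)^{\hat{E}} = e(B)^{2\hat{E}}$ with $\hat{E} = 2^p3^q/(2d)$, and $3E = 2\hat{E} = N$. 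Hence I can write $e(A) = e(B)^N + S_A$, $e(\bar{A}) = e(B)^N + S_{\bar{A}}$, and $e(\hat{A}) = e(B)^N + S_{\hat{A}}$, where each remainder $S_X$ collects exactly those terms involving at least one appended factor and therefore has non-negative coefficients.

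The comparison then reduces to controlling the top degrees of the remainders. Set $\delta := \deg(\maxt(x'))$; by the degree-gap computation $\delta = 2^p3^q - (3d - 2)$, and it is the degree of the highest monomial of $e(A)$ involving $x'$. Every term of $S_A$ involves either $x'$ (degree $\le \delta$) or one of the ellipsis appendages $C$ (degree $\le \deg(\maxt(C)) \le \delta$ by hypothesis), so $\deg(S_A) \le \delta$. On the other side, the appended $U$ contributes the monomial $\maxt(U)$ of degree $D_U = 2^p3^q - (3d - 2\dege(U))$ with positive coefficient in $S_{\bar{A}}$; since $U \ne x$ forces $\dege(U) \ge 3$, we get $D_U - \delta = 2(\dege(U) - 1) > 0$, whence $\deg(S_{\bar{A}}) \ge D_U > \delta \ge \deg(S_A)$. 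Likewise the appended $V$ contributes $\maxt(V)$ of degree $D_V = 2^p3^q - (2d - 3\dege(V))$ with positive coefficient in $S_{\hat{A}}$, and $D_V - \delta = d + 3\dege(V) - 2 > 0$ for every $V$ (including $V = x$), so $\deg(S_{\hat{A}}) \ge D_V > \delta \ge \deg(S_A)$.

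Finally I would read off the lexicographic order. Since $\deg(S_{\bar{A}}) > \deg(S_A)$ and $S_{\bar{A}}$ has non-negative coefficients, the difference $e(\bar{A}) - e(A) = S_{\bar{A}} - S_A$ has its highest nonzero coefficient at degree $\deg(S_{\bar{A}})$, where it equals the positive leading coefficient of $S_{\bar{A}}$ (as $S_A$ vanishes above $\delta$); hence $e(A) <_L e(\bar{A})$, and the identical argument with $S_{\hat{A}}$ in place of $S_{\bar{A}}$ gives $e(A) <_L e(\hat{A})$. This uniformly absorbs the degenerate case $3\dege(B) = 2\dege(U)$, where $D_U = 2^p3^q$ and $U$ merely inflates the leading coefficient of $e(\bar{A})$. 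I expect the main work to be the bookkeeping that the pure-$B$ part is genuinely common to all three expressions and that no appended summand of $A$ reaches above degree $\delta$ — that is, rigorously justifying the decomposition $e(X) = e(B)^N + S_X$ and the bound $\deg(S_A) \le \delta$ from the stage-by-stage expansion together with the standing hypothesis on $\deg(\maxt(C))$; once these are secured, the non-negativity of coefficients makes the comparison fall out immediately.
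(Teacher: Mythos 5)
Your proposal is correct and takes essentially the same route as the paper's own proof: both subtract the common pure-$B$ part $e(B)^N = (e(B)^3)^{2^{p-m}3^{q-n}} = (e(B)^2)^{2^{p-m-1}3^{q-n+1}}$, bound the degree of what remains of $e(A)$ by $\deg(\maxt(x'))$ via the ellipses hypothesis, and show by the same degree-gap arithmetic that the remainders of $e(\bar{A})$ and $e(\hat{A})$ reach strictly higher degree, yielding the lexicographic comparison. Your explicit appeals to non-negativity of coefficients and the $S_X$ decomposition simply spell out bookkeeping that the paper leaves implicit.
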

\begin{proof}
By our assumption, the subexpressions in the ellipses of $A$ give rise to terms with powers no higher than that of $\maxt(x')$. Suppose $e(B)^3$ is of degree $2^m3^n$. Notice that $(e(B)^3)^{2^{p-m}3^{q-n}} = (e(B)^2)^{2^{p-m-1}3^{q-n+1}}$ is common to both $e(A)$ and $e(\bar{A})$ (if $\bar{A}$ exists), and is common to both $e(A)$ and $e(\hat{A})$ (if $\hat{A}$ exists). We have $\deg(e(A) - (e(B)^3)^{2^{p-m}3^{q-n}}) = \deg(\maxt(x'))$, $\deg(e(\bar{A}) - (e(B)^3)^{2^{p-m}3^{q-n}}) \ge \deg(\maxt(U))$, and $\deg(e(\hat{A}) - (e(B)^2)^{2^{p-m-1}3^{q-n+1}}) \ge \deg(\maxt(V))$. Since $U \ne x$, we have $\deg(\maxt(U)) > \deg(\maxt(x'))$, so $\deg(e(A) - (e(B)^3)^{2^{p-m}3^{q-n}}) < \deg(e(\bar{A}) - (e(B)^3)^{2^{p-m}3^{q-n}})$. It follows that $e(A) <_L e(\bar{A})$ (in the case that $\bar{A}$ exists) as desired. Since $\dgap(x',B) = 2^m3^n - 2 > 2^{m+1}3^{n-1} - 3 \ge 2^{m+1}3^{n-1} - 3\,\dege(V) = \dgap(B,V)$, we have $\deg(\maxt(x')) = 2^p3^q - \dgap(x',B) < 2^p3^q - \dgap(B,V) = \deg(\maxt(V))$, so $\deg(e(A) - (e(B)^3)^{2^{p-m}3^{q-n}}) < \deg(e(\hat{A}) - (e(B)^2)^{2^{p-m-1}3^{q-n+1}})$. It follows that $e(A) <_L e(\hat{A})$ (in the case that $\hat{A}$ exists) as desired.
\end{proof}

\begin{lem}
\label{grightx}
Let $A = f(\ldots f(B,x') \ldots)$ be an $f$-expression such that $x' := x$, $\dege(A) = 2^p3^q$, and $B$ contains a core of $A$. Suppose $\bar{A} = f(\ldots f(B,U) \ldots)$ is an $f$-expression such that $U \ne x$, $\dege(\bar{A}) = 2^p3^q$, and $B$ contains a core of $\bar{A}$. Suppose that for every $f$-expression $C$ in the ellipses $( \ldots )$ of $A$ we have $\deg(\maxt(C)) \le \deg(\maxt(x'))$. Then $e(A) <_L e(\bar{A})$.
\end{lem}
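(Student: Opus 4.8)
The plan is to mirror, in the single remaining case, the argument used for the $\bar{A}$ case of Lemma~\ref{gleftx}, now with $B$ sitting on the \emph{left} of the appended symbol rather than the right. Write $2\,\dege(B) = 2^m3^n$, so that the subexpression $f(B,x')$ contributes $e(B)^2 + x^3$ and, since $B$ contains a core of $A$, it is the term $e(B)^2$ that carries the degree $2^m3^n$. Developing $A$ upward from $f(B,x')$ to degree $2^p3^q$ requires exactly $p-m$ left-appendages and $q-n$ right-appendages, independently of their order, so the ``pure-spine'' contribution in which one always selects the $B$-side is $(e(B)^2)^{2^{p-m}3^{q-n}}$, a polynomial of degree $2^m3^n \cdot 2^{p-m}3^{q-n} = 2^p3^q$. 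The same count applies verbatim to $\bar{A} = f(\ldots f(B,U)\ldots)$, whose spine is $f(B,U)$ with $e(B)^2 + e(U)^3$ and whose pure-spine contribution is again $(e(B)^2)^{2^{p-m}3^{q-n}}$. Thus this polynomial is a common summand of $e(A)$ and of $e(\bar{A})$, and the whole comparison reduces to comparing the two remainders after subtracting it.

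First I would argue that $\deg\bigl(e(A) - (e(B)^2)^{2^{p-m}3^{q-n}}\bigr) = \deg(\maxt(x'))$. Every monomial of $e(A)$ that is not part of the pure spine must involve either the factor $x^3$ coming from $x'$ in $f(B,x')$ or a factor coming from one of the $f$-expressions $C$ in the ellipses; the former contributes nothing of degree exceeding $\deg(\maxt(x'))$ and the latter, by hypothesis, satisfy $\deg(\maxt(C)) \le \deg(\maxt(x'))$. Since every polynomial here has nonnegative coefficients, there is no cancellation and the remainder has degree exactly $\deg(\maxt(x'))$. For $\bar{A}$ I only need the lower bound $\deg\bigl(e(\bar{A}) - (e(B)^2)^{2^{p-m}3^{q-n}}\bigr) \ge \deg(\maxt(U))$, which holds because $U$ itself contributes the monomial $\maxt(U)$ with a positive coefficient. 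It then remains to compare the two governing degrees. Using the identity $\deg(\maxt(-)) = \dege(\cdot) - \dgap(\cdot,\cdot)$ recorded before Lemma~\ref{gleftx}, together with $\dgap(B,x') = 2^m3^n - 3$ and $\dgap(B,U) = 2^m3^n - 3\,\dege(U)$, the hypothesis $U \ne x$ forces $\dege(U) \ge 3 > 1 = \dege(x')$, hence $\dgap(B,U) < \dgap(B,x')$ and therefore $\deg(\maxt(U)) > \deg(\maxt(x'))$. Consequently the remainder for $\bar{A}$ outreaches that for $A$ in degree, and since both remainders have nonnegative coefficients, the highest power at which $e(A)$ and $e(\bar{A})$ disagree lies strictly above $\deg(\maxt(x'))$ and receives a strictly larger coefficient in $e(\bar{A})$; by Definition~\ref{lexicographic ordering} this gives $e(A) <_L e(\bar{A})$.

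The delicate point --- and the step I expect to require the most care --- is the structural bookkeeping in the first paragraph: verifying that $(e(B)^2)^{2^{p-m}3^{q-n}}$ really is identical as a polynomial in both expansions even when the ellipsis developments of $A$ and $\bar{A}$ differ, and that after removing it the entire residual degree of $e(A)$ is pinned down by $\maxt(x')$. This is exactly where the hypothesis on the ellipses is indispensable: without the bound $\deg(\maxt(C)) \le \deg(\maxt(x'))$ an inner appendage $C$ could push the residual degree of $e(A)$ above $\deg(\maxt(x'))$ and break the comparison. By contrast, no such hypothesis is needed on the ellipses of $\bar{A}$, since any additional high-degree contributions there only enlarge $e(\bar{A})$ and thus can only reinforce the inequality $e(A) <_L e(\bar{A})$.
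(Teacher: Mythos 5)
Your proposal is correct and follows essentially the same route as the paper's own proof: subtract the common summand $(e(B)^2)^{2^{p-m}3^{q-n}}$, pin the degree of the remainder of $e(A)$ at $\deg(\maxt(x'))$ using the ellipses hypothesis, bound the remainder of $e(\bar{A})$ below by $\deg(\maxt(U))$, and conclude from $\deg(\maxt(U)) > \deg(\maxt(x'))$. The extra details you supply (the $\dgap$ computation showing $\dgap(B,U) < \dgap(B,x')$, and the no-cancellation remark justifying the lexicographic conclusion) are correct elaborations of steps the paper leaves implicit.
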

\begin{proof}
By our assumption, the subexpressions in the ellipses of $A$ give rise to terms with powers no higher than that of $\maxt(x')$. Suppose $e(B)^2$ is of degree $2^m3^n$. Notice that $(e(B)^2)^{2^{p-m}3^{q-n}}$ is common to both $e(A)$ and $e(\bar{A})$. We have $\deg(e(A) - (e(B)^2)^{2^{p-m}3^{q-n}}) = \deg(\maxt(x'))$ and $\deg(e(\bar{A}) - (e(B)^2)^{2^{p-m}3^{q-n}}) \ge \deg(\maxt(U))$. Since $U \ne x$, we have $\deg(\maxt(U)) > \deg(\maxt(x'))$, so $\deg(e(\bar{A}) - (e(B)^2)^{2^{p-m}3^{q-n}}) > \deg(e(A) - (e(B)^2)^{2^{p-m}3^{q-n}})$. It follows that $e(A) <_L e(\bar{A})$ as desired.
\end{proof}

\begin{lem}
\label{maxt left x}
Let $U \in \term(f;x)$ where $U \ne x$. Let $B_U^{(1)} = f(x_1,U)$ where $x_1 := x$. For every positive integer $n$, let $B_U^{(n + 1)} = f(x_{n+1}, B_U^{(n)})$, where $x_{n+1} := x$. Let $A = f(\ldots B_U^{(n)} \ldots)$ where $n \in \mathbb{N}$. Then $\deg(\maxt(x_1)) > \deg(\maxt(x_2)) > \ldots > \deg(\maxt(x_n))$.
\end{lem}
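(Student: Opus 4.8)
The plan is to reduce the claimed chain of strict inequalities to the single, easily verified fact that the degrees $\dege(B_U^{(i)})$ grow strictly with $i$, by invoking the degree-gap preservation formula established just before Lemma \ref{gleftx}. For convenience set $B_U^{(0)} := U$, so that $B_U^{(i)} = f(x_i, B_U^{(i-1)})$ for every $i \ge 1$, and each $x_i$ is the left argument of the subexpression $B_U^{(i)}$ of $A$. First I would check that the notation $\maxt_A(x_i)$ is legitimate for each $i$, i.e. that writing $A = f(\ldots f(x_i, B_U^{(i-1)}) \ldots)$ is consistent with Notation \ref{ellipses form}; this requires that $B_U^{(i-1)}$ contain a core of $A$. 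Since $A = f(\ldots B_U^{(n)} \ldots)$, some occurrence of $f(x,x)$ inside $B_U^{(n)}$ is a core of $A$ by Lemma \ref{cores lemma}; as $U \ne x$, none of the subexpressions $f(x_k, B_U^{(k-1)})$ equals $f(x,x)$ (their right arguments contain $U \ne x$), so this core must lie inside $U$, hence inside every $B_U^{(i-1)}$ for $i = 1, \ldots, n$.

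Next I would apply the degree-gap identity. Casting $f(x_i, B_U^{(i-1)})$ in the role of $f(C,B)$ with $C = x_i$ and $B = B_U^{(i-1)}$, the preservation of the degree-gap in the expansion of $e(A)$ gives
\[ \deg(\maxt_A(x_i)) = \dege(A) - \dgap(x_i, B_U^{(i-1)}) = \dege(A) - \bigl(3\,\dege(B_U^{(i-1)}) - 2\bigr), \]
using $\dege(x_i) = 1$. Since $\dege(A)$ is a fixed constant, the desired chain $\deg(\maxt(x_1)) > \cdots > \deg(\maxt(x_n))$ is equivalent to the strict increase $\dege(B_U^{(0)}) < \dege(B_U^{(1)}) < \cdots < \dege(B_U^{(n-1)})$.

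Finally I would establish this strict increase directly from the recursion. Because $e(B_U^{(i)}) = x^2 + e(B_U^{(i-1)})^3$ and $\dege(B_U^{(i-1)}) \ge 1$, the cubic term dominates, so $\dege(B_U^{(i)}) = 3\,\dege(B_U^{(i-1)})$; hence $\dege(B_U^{(i)}) = 3^{\,i}\,\dege(U)$, which is strictly increasing in $i$ (indeed tripling at each step, with $\dege(U) \ge 3$ since $U \ne x$). This yields the claim.

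I expect the only real subtlety to be the bookkeeping in the first step, namely confirming that a core of $A$ genuinely sits inside $U$, so that each $B_U^{(i-1)}$ contains a core and every $\maxt_A(x_i)$ is defined and governed by the same degree-gap formula; once that formula is licensed, the remaining computation is immediate. The heart of the matter is simply that appending an $x$ on the left via $f(x, -)$ triples the degree of the growing subexpression, which in turn widens the degree-gap $\dgap(x_i, B_U^{(i-1)})$ and thereby lowers $\deg(\maxt(x_i))$.
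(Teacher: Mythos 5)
Your proof is correct and follows essentially the same route as the paper's: locate the core inside $U$ (so each $B_U^{(i-1)}$ contains a core and $\maxt_A(x_i)$ is governed by the degree-gap formula), then use $\deg(\maxt(x_i)) = \dege(A) - \dgap(x_i,B_U^{(i-1)})$ together with the strict growth of $\dege(B_U^{(i)})$, hence of the degree-gaps. The only difference is that you spell out details (the core-in-$U$ bookkeeping and $\dege(B_U^{(i)}) = 3^i\,\dege(U)$) that the paper leaves implicit.
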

\begin{proof}
Since $B_U^{(n)}$ contains a core of $A$, $U$ must contain a core of $A$. Notice that $\dgap(x_1,U) < \dgap(x_2,B_U^{(1)}) < \ldots < \dgap(x_n,B_U^{(n-1)})$. Since $\deg(\maxt(x_1)) = \dege(A) - \dgap(x_1,U)$, $\deg(\maxt(x_2)) = \deg(A) - \dgap(x_2,B_U^{(1)})$, \ldots, and $\deg(\maxt(x_n)) = \dege(A) - \dgap(x_n,B_U^{(n-1)})$, the conclusion immediately follows.
\end{proof}

Analogously, we have the following
\begin{lem}
\label{maxt right x}
Let $V \in \term(f;x)$ where $V \ne x$. Let $C_V^{(1)} = f(V,x_1)$ where $x_1 := x$. For every positive integer $n$, let $C_V^{(n+1)} = f(C_V^{(n)},x_{n+1})$, where $x_{n+1} := x$. Let $A = f(\ldots C_V^{(n)} \ldots)$ where $n \in \mathbb{N}$. Then $\deg(\maxt(x_1)) > \deg(\maxt(x_2)) > \ldots > \deg(\maxt(x_n))$.
\end{lem}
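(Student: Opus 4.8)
The plan is to mirror, almost verbatim, the proof of Lemma \ref{maxt left x}, interchanging the roles of the left and right operands and invoking the ``opposite case'' of the degree-gap/$\maxt$ machinery---equation (\ref{deg gap rev}) and (\ref{max term rev})---in place of the original case. The first step is to locate the core. Since $A = f(\ldots C_V^{(n)} \ldots)$, the subexpression $C_V^{(n)}$ contains a core of $A$. Writing $C_V^{(0)} := V$, we have $C_V^{(k)} = f(C_V^{(k-1)}, x_k)$ with $x_k := x$, so the right operand at every stage is the bare variable $x$, which is not an occurrence of $f(x,x)$. Hence no core can lie in any $x_k$, and the core must descend into the left operand $C_V^{(k-1)}$ at each stage, ultimately lying inside $V$; in particular $V$, and every $C_V^{(k-1)}$, contains a core of $A$.

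Next I would read off the degree gaps. At stage $k$ the subexpression $C_V^{(k)} = f(C_V^{(k-1)}, x_k)$ has its core in the left operand $C_V^{(k-1)}$, so this is exactly the situation of (\ref{deg gap rev}) with $C_V^{(k-1)}$ playing the role of the core-containing factor and $x_k$ the appended right operand. Thus
\[ \dgap(C_V^{(k-1)}, x_k) = 2\,\dege(C_V^{(k-1)}) - 3\,\dege(x_k) = 2\,\dege(C_V^{(k-1)}) - 3. \]
Because $V \ne x$ we have $\dege(V) \ge 3$, and since appending the degree-$1$ expression $x$ on the right never dominates the left operand we get $\dege(C_V^{(k)}) = 2\,\dege(C_V^{(k-1)})$; hence $\dege(C_V^{(0)}) < \dege(C_V^{(1)}) < \cdots$ strictly, and therefore
\[ \dgap(V, x_1) < \dgap(C_V^{(1)}, x_2) < \cdots < \dgap(C_V^{(n-1)}, x_n). \]

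Finally I would invoke the preservation of the degree gap under expansion: by the identity $\dege(A) - \deg(\maxt(x_k)) = \dgap(C_V^{(k-1)}, x_k)$ recorded immediately after (\ref{max term rev}), the values $\deg(\maxt(x_k))$ are obtained from the fixed quantity $\dege(A)$ by subtracting the strictly increasing gaps above. Consequently $\deg(\maxt(x_k))$ strictly decreases in $k$, giving $\deg(\maxt(x_1)) > \deg(\maxt(x_2)) > \cdots > \deg(\maxt(x_n))$, as claimed.

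The only point demanding any care---and the closest thing to an obstacle---is the bookkeeping in the middle step: one must confirm both that the core genuinely remains in the left operand at every stage (so that the opposite-case formula (\ref{deg gap rev}) applies rather than (\ref{deg gap})) and that the single appended $x$ on the right is always dominated, so that the degrees $\dege(C_V^{(k-1)})$, and hence the gaps, are \emph{strictly} increasing. Both facts hinge on the hypothesis $V \ne x$, which guarantees $\dege(V) \ge 3$ and thus $3\,\dege(x_k) = 3 \le 2\,\dege(C_V^{(k-1)})$ throughout the development.
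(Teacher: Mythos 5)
Your proposal is correct and is essentially the paper's own argument: the paper omits the proof of Lemma \ref{maxt right x}, stating it follows ``analogously'' to Lemma \ref{maxt left x}, and your proof is exactly that mirror-image argument (core descends into $V$, the gaps $\dgap(C_V^{(k-1)},x_k) = 2\,\dege(C_V^{(k-1)}) - 3$ strictly increase, and $\deg(\maxt(x_k)) = \dege(A) - \dgap(C_V^{(k-1)},x_k)$ then strictly decreases). You even supply slightly more detail than the paper does in the left-handed case, e.g.\ justifying $\dege(V) \ge 3$ and the strict growth of the degrees.
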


\begin{lem}
\label{maxt mixed x}
Let $A = f(\ldots f(f(x_1,B),x_2) \ldots)$ be an $f$-expression such that $x_1 := x$, $x_2 := x$ and $B \ne x$. Then $\deg(\maxt(x_1)) > \deg(\maxt(x_2))$.
\end{lem}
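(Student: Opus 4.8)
The plan is to reduce everything to the degree-gap preservation identities established just before Lemma \ref{gleftx}, in the same spirit as the proofs of Lemmas \ref{maxt left x} and \ref{maxt right x}, except that here I compare a \emph{left} appendage against a \emph{right} appendage. The key observation is that $x_1$ sits as the left appendage to $B$ inside $f(x_1,B)$, while $x_2$ sits as the right appendage to $f(x_1,B)$ inside $f(f(x_1,B),x_2)$. Consequently $\deg(\maxt(x_1))$ is governed by $\dgap(x_1,B)$ and $\deg(\maxt(x_2))$ by $\dgap(f(x_1,B),x_2)$, both computed relative to the full expression $A$; once these two gaps are in hand the inequality is immediate.

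First I would locate the core. By Notation \ref{ellipses form}, $f(f(x_1,B),x_2)$ contains a core of $A$. Since $B \ne x$ forces $\dege(B) \ge 3$, we have $\dege(f(x_1,B)) = 3\,\dege(B)$, so $3\,\dege(x_2) = 3 < 6\,\dege(B) = 2\,\dege(f(x_1,B))$; hence the core lies in $f(x_1,B)$. Likewise $2\,\dege(x_1) = 2 < 3\,\dege(B)$, so the core in fact lies in $B$. Thus both $B$ and $f(x_1,B)$ contain a core of $A$, which means the left-appendage configuration $f(x_1,B)$ (with appendage $x_1$) and the right-appendage configuration $f(f(x_1,B),x_2)$ (with appendage $x_2$) are both legitimate, so $\maxt_A(x_1)$ and $\maxt_A(x_2)$ are defined and the two preservation identities apply to the common ambient expression $A$.

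Then I would simply compute. Degree-gap preservation in the left-appendage case gives $\deg(\maxt(x_1)) = \dege(A) - \dgap(x_1,B) = \dege(A) - (3\,\dege(B) - 2)$, and in the reversed (right-appendage) case it gives $\deg(\maxt(x_2)) = \dege(A) - \dgap(f(x_1,B),x_2) = \dege(A) - (2\,\dege(f(x_1,B)) - 3) = \dege(A) - (6\,\dege(B) - 3)$. Subtracting yields $\deg(\maxt(x_1)) - \deg(\maxt(x_2)) = (6\,\dege(B) - 3) - (3\,\dege(B) - 2) = 3\,\dege(B) - 1 > 0$, so $\deg(\maxt(x_1)) > \deg(\maxt(x_2))$, as claimed.

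There is no genuinely hard calculation here; the only thing requiring care is the bookkeeping — matching each appendage to the correct degree-gap formula (the left identity for $x_1$, the reversed identity for $x_2$) and confirming that both may be applied to the same $A$. This is exactly where the hypothesis $B \ne x$ does its work: it guarantees that the core lies \emph{strictly} inside $B$ rather than $f(x_1,B)$ being itself the core $f(x,x)$, which is what makes $x_1$ a true left appendage and keeps $\dege(f(x_1,B)) = 3\,\dege(B)$. If that point is handled cleanly, the rest is the one-line arithmetic above.
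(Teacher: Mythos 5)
Your proof is correct and follows essentially the same route as the paper's: identify that the core lies in $B$, compute $\dgap(x_1,B) = 3\,\dege(B) - 2$ and $\dgap(f(x_1,B),x_2) = 6\,\dege(B) - 3$, and conclude via the degree-gap preservation identities $\deg(\maxt(x_i)) = \dege(A) - \dgap(\cdot,\cdot)$. The only difference is presentational — you spell out the core-location argument and the role of $B \ne x$, which the paper compresses into a single ``Notice that'' sentence.
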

\begin{proof}
Notice that $B$ must contain a core of $A$. We have $\dgap(x_1,B) = 3\,\dege(B) - 2 < 6\,\dege(B) - 3 = \dgap(f(x_1,B),x_2)$. Since $\deg(\maxt(x_1)) = \dege(A) - \dgap(x_1,B)$ and $\deg(\maxt(x_2)) = \dege(A) - \dgap(f(x_1,B),x_2)$, the conclusion immediately follows.
\end{proof}

\begin{prop}
\label{e-isolated left appendage}
Let $A = f( \ldots f(x',B) \ldots )$ be an $f$-expression such that $x' := x$. Suppose that for every $f$-expression $C$ in the ellipses $( \ldots )$ of $A$ we have $\deg(\maxt(C)) \le \deg(\maxt(x'))$. Suppose that $B$ is $e$-isolated with respect to $A$. Then $f(x',B)$ is $e$-isolated with respect to $A$.
\end{prop}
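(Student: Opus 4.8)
The plan is to combine the two hypotheses in the way they are designed to be combined: the $e$-isolation of $B$ with respect to $A$ pins down the \emph{literal} expression $B$ at one fixed stage of every relevant development, while the lexicographic inequalities of Lemma \ref{gleftx} single out $f(x',B)$ as the only admissible way to continue the development one step further. Write $\dege(B) = 2^m3^{n-1}$, so that $\dege(f(x',B)) = 3\,\dege(B) = 2^m3^n$ and $f(x',B)$ sits at the $(m+n)$th stage of a development, exactly one stage past $B$. Since $B$ is $e$-isolated with respect to $A$, the setup of that definition already records that $B$ contains a core of $A$; as a core is an occurrence of $f(x,x)$, this forces $B \ne x$, and hence the core of $A$ lying inside $f(x',B)$ must actually lie inside $B$, which gives ``$B$ contains a core of $A$'' in the precise form needed to invoke Lemma \ref{gleftx}. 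I would then fix an arbitrary $f$-expression $A'$ with $e(A') = e(A)$, together with an arbitrary development of $A'$, and show this development yields $f(x',B)$ at stage $m+n$; since $A'$ and the development are arbitrary, this is by definition the statement that $f(x',B)$ is $e$-isolated with respect to $A$.

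The heart of the matter is the single step from stage $m+n-1$ to stage $m+n$. Because $B$ is $e$-isolated with respect to $A$, the expression at stage $m+n-1$ is the literal $B$ (not just something $e$-equivalent to it), and it contains the core of $A'$ about which the development is built. Consequently the expression at stage $m+n$ is a one-step development of $B$ and has one of the two forms $f(W,B)$ or $f(B,W)$, with $B$ still containing a core of $A'$. If it is $f(W,B)$ with $W \ne x$, then $A' = f(\ldots f(W,B) \ldots)$ is exactly an instance of the expression $\bar A$ in Lemma \ref{gleftx} (taking $U = W$); the ellipsis hypothesis $\deg(\maxt(C)) \le \deg(\maxt(x'))$ on $A$ is available by assumption, so that lemma yields $e(A) <_L e(A')$. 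If instead it is $f(B,W)$, then $A'$ is an instance of $\hat A$ (taking $V = W$), and Lemma \ref{gleftx} again yields $e(A) <_L e(A')$. Each conclusion strictly contradicts $e(A) = e(A')$, so the surviving possibility is $f(W,B)$ with $W = x$, namely $f(x',B)$.

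The step requiring the most care — and the one I expect to be the true obstacle — is the bookkeeping of which expression plays which role in Lemma \ref{gleftx}. It is the \emph{fixed} expression $A$ that must supply the ellipsis hypothesis, whereas the competitor $A'$ occupies the slot of $\bar A$ or $\hat A$; and it is essential that the subexpression appearing in $A'$ at stage $m+n-1$ is the \emph{same} expression $B$ as in $A = f(\ldots f(x',B) \ldots)$, rather than merely an $e$-equivalent one, since Lemma \ref{gleftx} is stated with a common factor $B$ in both expressions. This literal coincidence is precisely what $e$-isolation of $B$ with respect to $A$ delivers, and it is what makes the hypotheses of Lemma \ref{gleftx} apply verbatim. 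Once that matching is secured, the contradiction with $e(A) = e(A')$ follows at once from the strictness (and irreflexivity) of $<_L$.
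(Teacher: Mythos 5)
Your proposal is correct and takes essentially the same route as the paper's own proof: use the $e$-isolation of $B$ with respect to $A$ to force the literal expression $B$ at the $(m+n-1)$st stage of every development of any $A'$ with $e(A') = e(A)$, then invoke Lemma \ref{gleftx} (with $A'$ playing the role of $\bar{A}$ or $\hat{A}$ and the fixed $A$ supplying the ellipsis hypothesis) to rule out both $f(U,B)$ with $U \ne x$ and $f(B,V)$, leaving $A'^{[m+n]} = f(x,B)$. The extra bookkeeping you spell out --- that $B$ contains the core about which the development of $A'$ is built, and that the literal (not merely $e$-equivalent) $B$ is what Lemma \ref{gleftx} needs --- is left implicit in the paper but is exactly right.
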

\begin{proof}
Suppose $3\,\dege(B) = 2^m3^n$. Let $A'$ be an $f$-expression $e$-equivalent to $A$. Then $A'^{[m+(n-1)]} = B$ by our assumption, so the $(m+n)$th stage of every development of $A'$ is either $f(U,B)$ or $f(B,V)$ for some $U, V \in \term(f;x)$. Suppose that either $A' = f(\ldots f(U,B) \ldots)$ where $U \ne x$ or $A' = f(\ldots f(B,V) \ldots)$. By Lemma \ref{gleftx} we have $e(A) <_L e(A')$, which is a contradiction. It follows that the $(m+n)$th stage of every development of $A'$ must be of the form $f(U,B)$ where $U = x$. Hence $A'^{[m+n]} = f(x,B)$ as desired.
\end{proof}

\begin{notn}
\label{x left appendage def}
Let $B^{(1)} = f(x,x)$. For every positive integer $n$, let $B^{(n + 1)} = f(x, B^{(n)})$. Also, we let $B^{(0)} = x$.
\end{notn}

Notation \ref{x left appendage def} is to be used for the remainder of this paper, and is not to be confused with the \textit{ad hoc} notations set up in Lemma \ref{maxt left x} and Lemma \ref{maxt right x}.

\begin{cor}
\label{x left appendage separation}
Let $A = f(\ldots f(x',B^{(m)}) \ldots)$ be an $f$-expression where $x' := x$. Suppose that for every $f$-expression $C$ in the ellipses $( \ldots )$ of $A$ we have $\deg(\maxt(C)) \le \deg(\maxt(x'))$. Then $f(x',B^{(m)})$ is $e$-isolated with respect to $A$.
\end{cor}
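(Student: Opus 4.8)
The plan is to produce $f(x',B^{(m)})$ as the end product of peeling off the left-appended copies of $x$ one at a time, applying Proposition~\ref{e-isolated left appendage} once per peel, with Lemma~\ref{cores lemma} supplying the innermost step. Recall that $f(x',B^{(m)}) = B^{(m+1)}$ is the left comb $f(w_m, f(w_{m-1}, \ldots, f(w_1, f(x,x)) \ldots))$ in which each $w_i := x$, the copy $w_1$ sits directly above the core $f(x,x) = B^{(1)}$, and $w_m = x'$ is outermost, so that $B^{(k+1)} = f(w_k, B^{(k)})$ for $1 \le k \le m$. I would prove by induction on $k$ that every $B^{(k)}$ with $1 \le k \le m+1$ is $e$-isolated with respect to $A$; the case $k = m+1$ is exactly the corollary.

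For the base case $k = 1$, the core $B^{(1)} = f(x,x)$ is $e$-isolated with respect to $A$ by Lemma~\ref{cores lemma} in the reformulation recorded just before Definition~\ref{agreement}, since $A \ne x$. For the inductive step, suppose $B^{(j)}$ is $e$-isolated with respect to $A$ for some $1 \le j \le m$ and regard $A = f(\ldots f(w_j, B^{(j)}) \ldots)$, exposing $B^{(j)}$ as the subexpression that contains the core. I would apply Proposition~\ref{e-isolated left appendage} with its ``$x'$'' taken to be $w_j$ and its ``$B$'' taken to be $B^{(j)}$: the hypothesis that $B$ be $e$-isolated with respect to $A$ is the inductive hypothesis, and the conclusion is precisely that $B^{(j+1)} = f(w_j, B^{(j)})$ is $e$-isolated with respect to $A$. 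Taking $j = m$ in the last step delivers the corollary.

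The one point requiring care — and the main obstacle — is verifying the remaining hypothesis of Proposition~\ref{e-isolated left appendage} at each stage, namely that $\deg(\maxt(C)) \le \deg(\maxt(w_j))$ for every $f$-expression $C$ in the ellipses of $A = f(\ldots f(w_j, B^{(j)}) \ldots)$. These $C$ are of two kinds: the outer subexpressions already present in the ellipses of $A = f(\ldots f(x',B^{(m)}) \ldots)$, and the intermediate copies $w_{j+1}, \ldots, w_m = x'$ appended between stage $j+1$ and stage $m+1$ of the development about the core. For the intermediate copies, Lemma~\ref{maxt left x} applied with $U = f(x,x)$ (whose appended copies of $x$ are exactly $w_1, \ldots, w_m$) gives $\deg(\maxt(w_1)) > \cdots > \deg(\maxt(w_m))$, so $\deg(\maxt(w_i)) < \deg(\maxt(w_j))$ whenever $i > j$. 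For the outer subexpressions, the corollary's hypothesis gives $\deg(\maxt(C)) \le \deg(\maxt(x')) = \deg(\maxt(w_m))$, and $\deg(\maxt(w_m)) \le \deg(\maxt(w_j))$ by the same chain of inequalities. Thus the single $\maxt$-condition assumed for $x' = w_m$ propagates to every earlier copy $w_j$, and reconstituting this hypothesis at each peel — out of the corollary's one hypothesis together with the monotonicity in Lemma~\ref{maxt left x} — is the only substantive work; the rest is a direct iteration.
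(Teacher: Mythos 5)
Your proof is correct and follows essentially the same route as the paper's: induction on $k$, with the base case $B^{(1)} = f(x,x)$ supplied by Lemma \ref{cores lemma}, and each peel handled by Proposition \ref{e-isolated left appendage}, whose $\maxt$ hypothesis is verified via Lemma \ref{maxt left x}. Your check that this hypothesis propagates at every stage (splitting the ellipsis expressions into the intermediate copies $w_{j+1},\ldots,w_m$ and the original outer subexpressions) is in fact spelled out more carefully than in the paper, which cites Lemma \ref{maxt left x} alone for that step.
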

\begin{proof}
We know that $B^{(1)} = f(x,x)$ is $e$-isolated with respect to $A$. Suppose we know that $B^{(k)}$ is $e$-isolated with respect to $A$ for some $1 \le k \le m$. Writing $A$ as $f(\ldots f(x'',B^{(k)}) \ldots)$ where $x'' := x$, we see by Lemma \ref{maxt left x} that for every $f$-expression $D$ in the ellipses of $f(\ldots f(x'',B^{(k)}) \ldots)$ we have $\deg(\maxt(D)) \le \deg(\maxt(x''))$. It then follows by Proposition \ref{e-isolated left appendage} that $f(x'',B^{(k)})$ is $e$-isolated with respect to $A$. This completes the induction.
\end {proof}

\begin{lem}
\label{left app lex min}
Let $A = f(\ldots f(x',B^{(m)}) \ldots)$ be an $f$-expression where $x' := x$. Let $\bar{A} = f(\ldots C \ldots)$ be an $f$-expression such that $\dege(\bar{A}) = \dege(A)$, $C \ne f(x',B^{(m)})$, and $\dege(C) = \dege(f(x',B^{(m)}))$. Suppose that for every $f$-expression $D$ in the ellipses of $A$ we have $\deg(\maxt(D)) \le \deg(\maxt(x'))$. Then $e(A) <_L e(\bar{A})$.
\end{lem}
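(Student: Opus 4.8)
The plan is to induct on $m$, reading the ellipses as a common outer context so that $A$ and $\bar A$ differ only in their stage-$(m+1)$ subexpression, $f(x',B^{(m)})$ versus $C$; this is the convention under which Lemma \ref{gleftx} is stated. After determining the shape of $C$, I would split into the case where $C$ retains $B^{(m)}$ as its core-bearing child, which is handled directly by Lemma \ref{gleftx}, and the case where it does not, which I would handle by the inductive hypothesis applied to the smaller left-tower $B^{(m)}$ together with a lifting argument that carries a lexicographic inequality upward through the common development.

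First I would pin down the shape of $C$. Since $\dege(C)=\dege(f(x',B^{(m)}))=3^{m+1}$ is odd and $C$ contains a core, writing $C=f(C_1,C_2)$ the core-bearing child must be the right child $C_2$: were the core in $C_1$ we would need $2\,\dege(C_1)=\dege(C)=3^{m+1}$, impossible by parity. Hence $\dege(C_2)=3^m$, $2\,\dege(C_1)<3^{m+1}$, and $C_2$ contains a core of $\bar A$. The base case $m=0$ is vacuous, since $f(x,x)$ is the only $f$-expression of degree $3$, so no admissible $C$ exists.

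Next I would split on whether $C_2=B^{(m)}$. If $C_2=B^{(m)}$, then $C=f(C_1,B^{(m)})$ with $C_1\neq x$ (otherwise $C=f(x',B^{(m)})$, which is excluded), so $\bar A$ is exactly the expression called $\bar A$ in Lemma \ref{gleftx}, with $U=C_1$ and $B=B^{(m)}$; the hypothesis on the $\maxt$-degrees of the ellipsis subexpressions of $A$ is precisely what that lemma requires, and it yields $e(A)<_L e(\bar A)$ at once. If $C_2\neq B^{(m)}$, I would instead apply the inductive hypothesis to the pair $B^{(m)}=f(x',B^{(m-1)})$ and $C_2$, each taken with empty ellipses (so that the $\maxt$-hypothesis is vacuous and the distinguished subexpression of $C_2$ is $C_2$ itself, of degree $3^m=\dege(B^{(m)})$ and distinct from $B^{(m)}$). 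This produces $e(B^{(m)})<_L e(C_2)$.

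It then remains to lift $e(B^{(m)})<_L e(C_2)$ up to $e(A)<_L e(\bar A)$, and this is where I expect the real work to be. The lift rests on two monotonicity facts for equal-degree polynomials with nonnegative coefficients: that $p<_L q$ implies $p^2<_L q^2$ and $p^3<_L q^3$ (immediate if the leading coefficients differ, and otherwise seen by noting that the top differing coefficient of the powers is a positive multiple of the top differing coefficient of $p,q$ plus identical lower-order corrections), and that adding a common polynomial preserves $<_L$. Cubing gives $e(B^{(m)})^3<_L e(C_2)^3$, with top differing term in degree at least $2\cdot 3^m>2$; since $e(f(x',B^{(m)}))=e(B^{(m)})^3+x^2$ agrees with $e(B^{(m)})^3$ in all degrees above $2$, while $e(C)=e(C_2)^3+e(C_1)^2$ dominates $e(C_2)^3$ coefficientwise, a comparison of coefficients from the top down gives $e(f(x',B^{(m)}))<_L e(C)$. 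Finally, running through the remaining stages of the common development, each of which sends the current core-bearing polynomial $S$ to $S^2+e(G)^3$ or to $e(G)^2+S^3$ for the same appended $G$, the two monotonicity facts propagate the inequality unchanged to $e(A)<_L e(\bar A)$. The delicate points are the cube step and the accompanying degree bookkeeping: one must check that at every stage the two polynomials being compared have equal degree and that the differing term stays strictly above the contributions of $x'$, of $C_1$, and of the context expressions $G$, which is exactly what the $\maxt$-hypothesis on $A$ secures.
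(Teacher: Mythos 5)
There is a genuine gap, and it is in your starting premise rather than in the fine print. You read the ellipses of $A$ and $\bar{A}$ as "a common outer context so that $A$ and $\bar A$ differ only in their stage-$(m+1)$ subexpression," and you assert this is the convention of Lemma \ref{gleftx}. Neither is true. By Notation \ref{ellipses form}, $f(\ldots C \ldots)$ only asserts that $C$ is a subexpression containing a core; the contexts of $A$ and $\bar A$ are completely independent (this is why the lemma must hypothesize $\dege(\bar A) = \dege(A)$ at all -- under a common context it would be automatic). Lemma \ref{gleftx} is likewise stated and proved for independent contexts: its proof subtracts the common pure power $(e(B)^3)^{2^{p-m}3^{q-n}}$ from both sides precisely because nothing else is shared, and that is also why it needs the $\maxt$ hypothesis. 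Moreover, the paper later applies Lemma \ref{left app lex min} (in the proof of Proposition \ref{determination of dege gen}) to compare $f(U,B^{(d_j)})$ with $f(V,W)$, i.e.\ with genuinely different contexts $f(U,-)$ and $f(V,-)$, so the common-context special case you prove is not enough for its intended use. Concretely, the failure is in your Case B: once the contexts may differ, there are no "remaining stages of the common development" and no "same appended $G$," so the monotonicity-plus-lifting argument that carries $e(B^{(m)}) <_L e(C_2)$ up to $e(A) <_L e(\bar A)$ has nothing to run on. (Your Case A survives, exactly because Lemma \ref{gleftx} does not need common contexts.)

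The paper's proof avoids this entirely and needs no induction. Since $\dege(C) = 3^{m+1}$ is not of the form $2^p3^q$ with $p \ge 1$, $q \ge 2$, Lemma \ref{non-monic condition} forces $e(C)$ to be monic, so $C$ has a unique core and hence a unique development; and since the degree is a pure power of $3$, every stage of that development left-appends, i.e.\ $C^{[k+1]} = f(U_k, C^{[k]})$. As $C \ne f(x',B^{(m)})$, there is a first stage $k+1 \le m+1$ where this development departs from that of $B^{(m+1)}$, giving $C^{[k+1]} = f(U, B^{(k)})$ with $U \ne x$. One then rewrites $\bar A = f(\ldots f(U,B^{(k)}) \ldots)$ and $A = f(\ldots f(x'', B^{(k)}) \ldots)$, where $x''$ is the tower variable attached to $B^{(k)}$; Lemma \ref{maxt left x} shows $x''$ has the largest $\maxt$-degree among everything now in the ellipses of $A$ (the original ellipses are controlled by the hypothesis on $\maxt(x')$, and the remaining tower variables by the monotonicity of Lemma \ref{maxt left x}), and Lemma \ref{gleftx} -- valid for differing contexts -- concludes $e(A) <_L e(\bar A)$ in one step. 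Your parity analysis of $C$ and your Case A are consistent with this, but the inductive descent in Case B must be replaced by this "descend to the first divergence stage and invoke Lemma \ref{gleftx} there" argument; alternatively, you would have to prove a context-independent version of your lifting step, which is essentially Lemma \ref{gleftx} itself.
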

\begin{proof}
$C$ has a unique core by Lemma \ref{non-monic condition}. Since $C \ne f(x',B^{(m)})$ and $\dege(C) = \dege(f(x',B^{(m)}))$, we can find some $k \le m$ in $\mathbb{N}$ such that $C^{[k+1]} = f(U,B^{(k)})$ where $U \ne x$. Then we can write $\bar{A}$ as $f(\ldots f(U,B^{(k)}) \ldots)$. Since $f(x',B^{(m)})^{[k+1]} = f(x'',B^{(k)})$ where $x'' := x$, we can write $A$ as $f(\ldots f(x'',B^{(k)}) \ldots)$. We see by Lemma \ref{maxt left x} that for every $f$-expression $E$ in the ellipses of $f(\ldots f(x'',B^{(k)}) \ldots)$ we have $\deg(\maxt(E)) \le \deg(\maxt(x''))$. It follows by Lemma \ref{gleftx} that $e(A) <_L e(\bar{A})$.
\end{proof}

Lexicographic ordering on polynomials with nonnegative integer coefficients is a well-ordering. In particular, for all $m$ and $n$ the set of all polynomials of degree $2^m3^n$ induced by $f$-expressions contains exactly one lexicographically minimal polynomial, and we will see that this polynomial corresponds to an $e$-isolated $f$-expression.

As an illustration, we claim that the $f$-expression $f(f(f(x,f(x,x)),x),x)$ leads to the lexicographically minimal polynomial with degree $36 = (2^2)(3^2)$. The following lemma gives the general rule.
\begin{lem}
\label{lexico min f-expression}
For any $A \in \term(f;x)$, let $u(A) = f(x,A)$ and $v(A) = f(A,x)$. The $f$-expression that induces the lexicographically minimal polynomial of degree $2^m3^n$ with $n \ge 1$ is $v(\ldots v(u(\ldots u(x) \ldots)) \ldots)$ with $m$ $v$'s followed by $n$ $u$'s in left-to-right order. Moreover, this $f$-expression is $e$-isolated.
\end{lem}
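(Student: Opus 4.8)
The plan is to prove the stronger assertion that $A^* := v(\ldots v(u(\ldots u(x)\ldots))\ldots) = v^m(u^n(x))$ induces the \emph{strict} lexicographic minimum among all degree-$2^m3^n$ polynomials induced by $f$-expressions; $e$-isolation then comes for free, since $e(\bar A) = e(A^*)$ for $\bar A \neq A^*$ would contradict $e(A^*) <_L e(\bar A)$. First I would record the structural facts about $A^*$: since each $u$ triples the degree and each $v$ doubles it, while leading coefficients are preserved under squaring and cubing, $e(A^*)$ is monic of degree $2^m3^n$, and the unique development of $A^*$ first builds the left tower $u(x),u^2(x),\ldots,u^n(x)=B^{(n)}$ and then the right tower $v(B^{(n)}),\ldots,v^m(B^{(n)})$. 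Next I reduce to monic competitors: if $e(\bar A)$ is non-monic its leading coefficient exceeds $1$, so $e(A^*) <_L e(\bar A)$ immediately. Finally, since two distinct cores first separate at a subexpression $f(D_1,D_2)$ with $2\,\dege(D_1)=3\,\dege(D_2)$, at which stage the leading coefficient is already at least $2$ and stays so under all further squarings and cubings, a monic $\bar A$ has a unique core and hence, by the equivalence between having a unique core and a unique development, a unique development.

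The engine of the argument is the $3$-adic constraint on developments together with Lemmas \ref{gleftx} and \ref{grightx}. A right-append $f(A,C)$ leaves the power of $3$ in the degree unchanged, a left-append $f(C,A)$ raises it by one, and this power never decreases along a development; since $\dege(\bar A)=2^m3^n$ has $3$-part exactly $n$, all the left-appends of any development must occur before the $3$-part reaches $n$. Now I compare the unique developments of $A^*$ and a monic $\bar A\neq A^*$: both begin with the core $f(x,x)$, so they first diverge at some stage $k+1\ge 2$, sharing the same $f$-expression $S$ at stage $k$, whose degree has $3$-part $t$. If $t<n$ (equivalently $A^*$ left-appends $x$ here), then $\bar A$ either left-appends some $U\neq x$ or right-appends some $V$, and both are covered by Lemma \ref{gleftx} with $B=S$ (its $\bar A$-case and its $\hat A$-case respectively), giving $e(A^*) <_L e(\bar A)$. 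If $t=n$ (so $A^*$ right-appends $x$), then $\bar A$ cannot left-append without raising its $3$-part above $n$, so it must right-append some $V\neq x$, and Lemma \ref{grightx} with $B=S$ gives $e(A^*) <_L e(\bar A)$.

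To invoke Lemmas \ref{gleftx} and \ref{grightx} I must verify their hypothesis that every $f$-expression $C$ lying in the ellipses of $A^*$ outside the distinguished appended copy $x'$ satisfies $\deg(\maxt(C))\le\deg(\maxt(x'))$. Each such $C$ is itself an appended copy of $x$ lying farther out than $x'$ in the mixed tower, and I would establish the inequality by chaining the three monotonicity lemmas so that $\deg(\maxt(\cdot))$ decreases as one moves outward from $x'$: Lemma \ref{maxt left x} handles the left-appended copies outside $x'$, Lemma \ref{maxt mixed x} bridges the outermost left-append to the innermost right-append, and Lemma \ref{maxt right x} handles the remaining right-appended copies. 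The edge cases are routine: when $n=1$ there is no inner left tower and only Lemma \ref{grightx} is ever needed, and when the divergence falls at the junction stage $k+1=n$ one uses Lemma \ref{maxt mixed x} directly followed by Lemma \ref{maxt right x}.

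I expect the main obstacle to be the cross-side comparison in the case $t<n$ in which $\bar A$ right-appends while $A^*$ left-appends $x$: here the two expressions \emph{square} versus \emph{cube} the same core-child $S$, and it is not \textit{a priori} clear which is lexicographically smaller. This is exactly the $\hat A$-case of Lemma \ref{gleftx}, whose proof rests on the degree-gap inequality $\dgap(x',S)>\dgap(S,V)$. The conceptual point to get right is that the stage at which two developments first diverge already pins down the highest-degree coefficient at which the induced polynomials differ: the common leading block, namely $e(S)$ raised to the appropriate fixed power, is shared by both, so the local comparison at the divergence stage controls the global lexicographic order no matter how $A^*$ and $\bar A$ behave farther out.
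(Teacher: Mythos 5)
Your proof is correct and takes essentially the same approach as the paper's: both locate the first stage at which the development of the given $f$-expression diverges from a development of the competitor, split into cases according to whether that stage lies in the $u$-tower or the $v$-tower, and conclude via Lemma \ref{gleftx} or Lemma \ref{grightx}, with the $\maxt$ hypotheses verified by chaining Lemmas \ref{maxt left x}, \ref{maxt mixed x}, and \ref{maxt right x}. Your opening reduction to monic competitors with unique developments (and the explicit $3$-adic constraint) is a sound, slightly more explicit substitute for the paper's device of choosing the largest stage at which the development of $A$ agrees with \emph{every} development of $A'$.
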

\begin{proof}
Let $A := v(\ldots v(u(\ldots u(x) \ldots)) \ldots)$ with $m$ $v$'s followed by $n$ $u$'s in left-to-right order, and let $A' \in \term(f;x)$ be such that $A' \ne A$ and $\dege(A') = \dege(A)$. It is clear that $A$ has exactly one core. Let $k$ be the largest positive integer such that the development of $A$ agrees with every development of $A'$ at the $k$th stage. Let $C$ denote the $f$-expression at the $k$th stage of the development of $A$. Suppose $k < n$. Then we can write $A$ as $f(\ldots f(x_1,C) \ldots)$ and we can write $A'$ as either $f(\ldots f(U,C) \ldots)$ or $f(\ldots f(C,V) \ldots)$ where $x_1 := x$ and $U, V \in \term(f;x)$ such that $U \ne x$. By Lemma \ref{maxt left x}, Lemma \ref{maxt right x}, and Lemma \ref{maxt mixed x}, we have $\deg(\maxt(x_1)) > \deg(\maxt(D))$ for every $f$-expression $D$ in the ellipses of $f(\ldots f(x_1,C) \ldots)$. It follows by Lemma \ref{gleftx} that $e(A) <_L e(A')$. Suppose $k \ge n$. Then we can write $A$ as $f(\ldots f(C,x_2) \ldots)$ and we can write $A'$ as $f(\ldots f(C,W) \ldots)$, where $x_2 := x$ and $W \in \term(f;x)$ such that $W \ne x$. By Lemma \ref{maxt right x}, we have $\deg(\maxt(x_2)) > \deg(\maxt(E))$ for every $f$-expression $E$ in the ellipses of $f(\ldots f(C,x_2) \ldots)$. It follows by Lemma \ref{grightx} that $e(A) <_L e(A')$. Thus, in all cases we have $e(A) <_L e(A')$. It immediately follows from this analysis that $A$ is $e$-isolated, though we could well have proven this particular fact by repeatedly applying Proposition \ref{appending x left} and Proposition \ref{appending x right}.
\end{proof}

As we will see, this concept of lexicographic minimality can be applied to prove that many classes of $f$-expressions are $e$-isolated. This concept also illustrates one advantage of working with the single-variable case of Friedman's problem, because it is less clear how one would lexicographically order multiple-variable polynomials.

\begin{prop}
\label{lexico min appendage}
Let $f(f(F,x'),B)$ be an $f$-expression such that $e(f(f(F,x'),B))$ has degree $2^m3^n$, where $x':= x$. Suppose that
\begin{enumerate}
	\item $3\,\dege(B) \le \deg(\maxt(x'))$, where we note that $\deg(\maxt(x')) = 3 + 2\,\dege(F)$
	\item $e(F)$ is lexicographically minimal among polynomials of degree $2^{m-2}3^n$ induced by $f$-expressions as characterized in Lemma \ref{lexico min f-expression}
	\item $a$, $b$ are $f$-expressions such that $e(f(a,b)) = e(f(f(F,x'),B))$.
\end{enumerate}
Then we must have $a = f(F,x')$ and $e(b) = e(B)$.
\end{prop}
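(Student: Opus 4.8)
The plan is to reduce the entire statement to the single assertion that $f(F,x')$ is $e$-isolated with respect to $A := f(f(F,x'),B)$, and then to prove \emph{that} by developing $f(F,x')$ outward from its core. Write $L := f(F,x')$. By hypothesis~(2) and Lemma~\ref{lexico min f-expression}, the lexicographic minimality of $e(F)$ (degree $2^{m-2}3^n$) forces $F = v(\ldots v(u(\ldots u(x))\ldots))$ with $m-2$ $v$'s followed by $n$ $u$'s, so $L = v(F)$ is exactly the lexicographically minimal $f$-expression of degree $2^{m-1}3^n$; in particular $L$ is monic and $e$-isolated, and $e(A) = e(L)^2 + e(B)^3$ is monic of degree $2^m3^n$. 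Granting that $L$ is $e$-isolated with respect to $A$, I would finish as follows. Put $A' := f(a,b)$, so $e(A') = e(A)$; since $e(A')$ is monic with nonnegative coefficients, Lemma~\ref{cores lemma} forces $A'$ to have exactly one core (each core contributes a positive amount to the leading coefficient $1$), hence a unique development. Isolation gives $A'^{[m+n-1]} = L$, and the final development step builds $A'=f(a,b)$ from $L$ by appending on one side. Appending on the left would make $L$ the right child and triple the degree to $3\cdot 2^{m-1}3^n \neq 2^m3^n$; so the last step appends on the right, forcing $a = L = f(F,x')$, whence $e(b)^3 = e(A) - e(a)^2 = e(B)^3$ and $e(b) = e(B)$.

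To prove that $L$ is $e$-isolated with respect to $A$, I would induct along the unique development $W_1 = f(x,x), W_2, \ldots, W_{m+n-1} = L$. By hypothesis~(2) this development consists of $n$ tripling ($u$-)steps followed by $m-1$ doubling ($v$-)steps; the base case is Lemma~\ref{cores lemma}. Throughout, the required hypotheses on $\maxt$ hold: by Lemmas~\ref{maxt left x}, \ref{maxt right x}, and~\ref{maxt mixed x} the $x$'s appended at successive stages have strictly decreasing $\deg(\maxt(\,\cdot\,))$ reading from the core outward, so at the stage appending a given $x'$ every $f$-expression in the surrounding ellipses (which is appended \emph{later}, hence further out) satisfies $\deg(\maxt(C)) \le \deg(\maxt(x'))$; and since $\deg(\maxt(B)) = 3\,\dege(B)$, hypothesis~(1) guarantees that $B$ too is dominated.

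The inductive step splits on the type of the next stage, and this is where the main obstacle lies. At a tripling step $W_{s+1} = f(x',W_s)$, the expression $W_{s+1}$ is the lexicographically minimal continuation, so Proposition~\ref{e-isolated left appendage} (via Lemma~\ref{gleftx}) rules out every alternative: any $e$-equivalent branching to $f(U,W_s)$ with $U \ne x$, or to $f(W_s,V)$, yields $e(A) <_L e(A')$, contradicting $e(A')=e(A)$. The doubling steps are the crux. Here $W_{s+1} = f(W_s,x')$ is \emph{not} lexicographically minimal (the left-append $f(x',W_s)$ is smaller), so the minimality argument is simply unavailable, and I must rule out the two bad branchings by different means. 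A different right child $f(W_s,V)$ with $V\neq x$ is killed directly by Lemma~\ref{grightx} (again $e(A) <_L e(A')$). The remaining branching, putting the core-containing $W_s$ back on the \emph{right} as $f(V,W_s)$, is a tripling step; but by hypothesis~(2) all triplings are already exhausted once the doubling phase begins — the power of $3$ dividing the degree has already reached its final value $3^n$ at $W_s$ — so, since developments only append and the $3$-content of the degree never decreases, no further tripling can occur. This valuation-saturation argument (which is exactly why hypothesis~(2) is indispensable beyond merely identifying the shape of $F$) is the step I expect to require the most care to state correctly.

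Assembling the induction gives that $W_{m+n-1} = L = f(F,x')$ is $e$-isolated with respect to $A$, and the extraction in the first paragraph then yields $a = f(F,x')$ and $e(b) = e(B)$, as desired. I would double-check two supporting points along the way: that $A$ itself has a unique core (so that $A^{[s]} = W_s$ is well defined), which follows since $2\,\dege(f(F,x')) > 3\,\dege(B)$ keeps the core inside $f(F,x')$; and that the $u$-tower portion could alternatively be handled wholesale by Corollary~\ref{x left appendage separation}, which I would cite to shorten the tripling half of the induction.
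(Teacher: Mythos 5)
Your proposal is correct and takes essentially the same route as the paper's own proof: the inner $u$-tower is handled by Corollary~\ref{x left appendage separation} (which you note as the shortcut), each doubling stage is forced by Lemma~\ref{grightx} ruling out a non-$x$ right child, and the final step extracts $a = f(F,x')$ by a degree comparison, giving $e(b) = e(B)$ by cancellation. Your explicit valuation-saturation argument (no tripling can occur once the $3$-content of the degree reaches $3^n$) is the same reasoning the paper uses, just left implicit there, so it is a fill-in rather than a genuinely different approach.
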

\begin{proof}
First notice that $m \ge 2$ and that, by Lemma \ref{lexico min f-expression}, $e(f(F,x'))$ is lexicographically minimal among polynomials of degree $2^{m-1}3^n$ induced by $f$-expressions. Writing $f(f(F,x'),B)$ as $f(\ldots f(x'',B^{(n-1)}) \ldots)$ where $x'' := x$, we see by Lemma \ref{maxt right x} and Lemma \ref{maxt mixed x} that $\deg(\maxt(C)) \le \deg(\maxt(x''))$ for every $f$-expression $C$ in the ellipses of $f(\ldots f(x'',B^{(n-1)}) \ldots)$. It follows by Corollary \ref{x left appendage separation} that $f(a,b)^{[n]} = B^{(n)} = f(f(F,x'),B)^{[n]}$. Assume we know that $f(a,b)^{[k]} = f(f(F,x'),B)^{[k]}$ for some positive integer $n \le k < m - 1 + n$. Suppose $f(a,b) = f(\ldots f(f(f(F,x'),B)^{[k]},D) \ldots)$ where $D \ne x$. Writing $f(f(F,x'),B)$ as $f(\ldots f(f(f(F,x'),B)^{[k]},x''') \ldots)$ where $x''' := x$, we see by Lemma \ref{maxt right x} that $\deg(\maxt(E)) \le \deg(\maxt(x'''))$ for every $f$-expression $E$ in the ellipses of $f(\ldots f(f(f(F,x'),B)^{[k]},x''') \ldots)$. It follows by Lemma \ref{grightx} that $e(f(a,b)) >_L e(f(f(F,x'),B))$, which is a contradiction. Thus, we must have $f(a,b)^{[k+1]} = f(f(f(F,x'),B)^{[k]},x) = f(f(F,x'),B)^{[k+1]}$. This completes the induction. Consequently, we have $f(a,b)^{[(m-1)+n]} = f(F,x)$, so it follows that $a = f(F,x)$. Finally, $e(f(f(F,x),b)) = e(f(f(F,x),B))$ implies that $e(b) = e(B)$.
\end{proof}

Next, we will apply the concept of lexicographic minimality to prove another general result. First, let $A$, $B$ be fixed $f$-expressions. We examine some of the restrictions $e(f(A,B)) = e(f(C,D))$ imposes on the $f$-expressions $C$ and $D$. To avoid triviality, assume $e(B) \ne e(D)$. We have $e(A)^2 + e(B)^3 = e(C)^2 + e(D)^3$ iff \begin{equation}\label{rearranging terms} e(B)^3 - e(D)^3 = e(C)^2 - e(A)^2 \end{equation} iff $(e(B) - e(D))(e(B)^2 + e(B)e(D) + e(D)^2) = e(C)^2 - e(A)^2$. Since $e(B) - e(D) \ne 0$, we know that $e(B) - e(D)$ is not a constant. So we have $\deg(e(C)^2 - e(A)^2) > \max(2\,\dege(B),2\,\dege(D))$. In particular, if $\dege(A) \le \dege(B)$, then we must have \begin{equation}\label{newlabel8} 2\,\dege(C) > \max(2\,\dege(B),2\,\dege(D)). \end{equation} This implies that $2\,\dege(C) > 2\,\dege(B) \ge 2\,\dege(A)$, which implies that $\dege(C) > \dege(A)$. It then follows by (\ref{rearranging terms}) that $e(B)^3 >_L e(D)^3$, so we have $e(B) >_L e(D)$. Of course, from (\ref{newlabel8}) we also obtain $\dege(C) > \dege(D)$. Therefore, we have established the following
\begin{lem}
\label{e-equiv condition}
Let $A$, $B$ be fixed $f$-expressions such that $\dege(A) \le \dege(B)$. If $e(f(A,B)) = e(f(C,D))$ and $e(B) \ne e(D)$, then $\dege(C) > \dege(A)$, $e(B) >_L e(D)$, and $\dege(C) > \dege(D)$.
\end{lem}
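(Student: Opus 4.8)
The plan is to convert the polynomial identity $e(f(A,B)) = e(f(C,D))$, which unwinds to $e(A)^2 + e(B)^3 = e(C)^2 + e(D)^3$, into the rearranged form (\ref{rearranging terms}), namely $e(B)^3 - e(D)^3 = e(C)^2 - e(A)^2$, and then extract degree information by factoring the left-hand side as a difference of cubes. First I would write $e(B)^3 - e(D)^3 = (e(B) - e(D))(e(B)^2 + e(B)e(D) + e(D)^2)$. Because every $f$-expression induces a polynomial with nonnegative integer coefficients and zero constant term, the factor $e(B)^2 + e(B)e(D) + e(D)^2$ has a strictly positive leading coefficient and degree exactly $\max(2\,\dege(B), 2\,\dege(D))$, since no cancellation can occur among its top-degree terms. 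The factor $e(B) - e(D)$ is nonzero by hypothesis and has zero constant term, hence is nonconstant, so its degree is at least $1$. As $\mathbb{Z}[x]$ is a domain, the degree of the product is the sum of the two degrees, giving $\deg(e(B)^3 - e(D)^3) > \max(2\,\dege(B), 2\,\dege(D))$.

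Next I would transfer this bound to the right-hand side. Since $\deg(e(C)^2 - e(A)^2) \le \max(2\,\dege(C), 2\,\dege(A))$ and the two sides of (\ref{rearranging terms}) are equal, I obtain the chain $\max(2\,\dege(C), 2\,\dege(A)) \ge \deg(e(C)^2 - e(A)^2) > \max(2\,\dege(B), 2\,\dege(D)) \ge 2\,\dege(B) \ge 2\,\dege(A)$, where the final inequality uses the hypothesis $\dege(A) \le \dege(B)$. This forces the maximum on the far left to be attained by $2\,\dege(C)$ rather than by $2\,\dege(A)$, yielding (\ref{newlabel8}), that is $2\,\dege(C) > \max(2\,\dege(B), 2\,\dege(D))$. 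The first and third conclusions then drop out at once: $\dege(C) > \dege(A)$ (via $2\,\dege(C) > 2\,\dege(B) \ge 2\,\dege(A)$) and $\dege(C) > \dege(D)$ (via $2\,\dege(C) > 2\,\dege(D)$).

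For the remaining conclusion $e(B) >_L e(D)$, I would return to (\ref{rearranging terms}). Having established $\dege(C) > \dege(A)$, the right-hand side $e(C)^2 - e(A)^2$ inherits its top-degree term from $e(C)^2$ and hence has a positive leading coefficient; therefore $e(B)^3 - e(D)^3$ has a positive leading coefficient, i.e. $e(B)^3 >_L e(D)^3$. The last step is to pass from $e(B)^3 >_L e(D)^3$ to $e(B) >_L e(D)$, and I expect this to be the only point requiring genuine care. I would justify it through the strict monotonicity of cubing for the ordering $>_L$ on polynomials with nonnegative coefficients: for any such $p$, $q$ the difference of cubes $p^3 - q^3 = (p - q)(p^2 + pq + q^2)$ has a leading coefficient of the same sign as that of $p - q$, since the quadratic factor is leading-coefficient-positive, whence $p >_L q$ forces $p^3 >_L q^3$. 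As $>_L$ is a total order and cubing is injective on these polynomials, were $e(D) >_L e(B)$ or $e(D) = e(B)$ we would get $e(D)^3 >_L e(B)^3$ or $e(D)^3 = e(B)^3$, each contradicting $e(B)^3 >_L e(D)^3$; hence $e(B) >_L e(D)$, completing the argument.
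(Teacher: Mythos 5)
Your proof is correct and follows essentially the same route as the paper's: rearranging the identity into (\ref{rearranging terms}), factoring the difference of cubes to force $\deg(e(C)^2 - e(A)^2) > \max(2\,\dege(B),2\,\dege(D))$, deducing (\ref{newlabel8}) from $\dege(A) \le \dege(B)$, and then reading off the three conclusions. The only difference is that you explicitly justify two steps the paper leaves implicit (that $e(B)-e(D)$ is nonconstant because induced polynomials have zero constant term, and that $e(B)^3 >_L e(D)^3$ implies $e(B) >_L e(D)$ via monotonicity of cubing under $>_L$), which is added care rather than a different approach.
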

\begin{notn}
\label{lead function}
In what follows, we will denote the coefficient of the highest degree term of a polynomial $p(x)$ by $\lead(p(x))$. For example, we have $\lead(e(f(f(x,f(x,x)),f(f(x,x),x)))) = 2$.
\end{notn}

For the next two propositions, fix $f(A,f(E,B^{(m)})) \in \term(f;x)$ where $\dege(E) \le \dege(B^{(m)})$ and $\dege(A) \le \dege(f(E,B^{(m)}))$, so $e(f(A,f(E,B^{(m)})))$ is monic, of degree $3^{m+2}$. Writing $f(A,f(E,B^{(m)}))$ as $f(A,f(E,f(x',B^{(m-1)})))$, we see that $2\,\dege(A) < \deg(\maxt(E)) < \deg(\maxt(x'))$, because $\dgap(A,f(E,B^{(m)})) = 3^{m+2} - 2\,\dege(A) \ge 3^{m+2} - 2\,\dege(f(E,B^{(m)})) = 3^{m+2} - 2 \cdot 3^{m+1} = 3^{m+1} > 3^{m+1} - 2\,\dege(E) = \dgap(E,B^{(m)})$ and $\dgap(E,B^{(m)}) = 3^{m+1} - 2\,\dege(E) \ge 3^{m+1} - 2\,\dege(B^{(m)}) = 3^{m+1} - 2 \cdot 3^m = 3^m > 3^m - 2 = \dgap(x',B^{(m-1)})$. It follows that $B^{(m)}$ is $e$-isolated with respect to $f(A,f(E,B^{(m)}))$ by Corollary \ref{x left appendage separation}. Suppose that \[e(f(A,f(E,B^{(m)}))) = e(f(C,D)).\] Then $D = f(F,B^{(m)})$ for some $F \in \term(f;x)$, so \begin{equation}\label{newlabel1} e(f(A,f(E,B^{(m)}))) = e(f(C,f(F,B^{(m)}))). \end{equation} In the following two propositions and their corollaries, we will use Lemma \ref{e-equiv condition} along with the concept of lexicographic minimality to show that, for a large class of $f$-expressions that $E$ may assume, the preceeding equality implies that \begin{equation}\label{partial determination} F = E \;\;\; \& \;\;\; e(C) = e(A).\end{equation}
\begin{prop}
\label{determination of dege}
Let $f(A,f(E,B^{(m)})) \in \term(f;x)$ have the property that $\dege(E) \le \dege(B^{(m)})$ and $\dege(A) \le \dege(f(E,B^{(m)}))$. Suppose that $e(f(A,f(E,B^{(m)}))) = e(f(C,D))$. Then $D = f(F,B^{(m)})$ for some $F \in \term(f;x)$. Furthermore, we must have $e(E) \ge_L e(F)$ and $\dege(E) = \dege(F)$.
\end{prop}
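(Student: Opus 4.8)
The plan is to begin from the reduction already carried out in the paragraph preceding the proposition: since $B^{(m)}$ is $e$-isolated with respect to $f(A,f(E,B^{(m)}))$, the hypothesis $e(f(A,f(E,B^{(m)}))) = e(f(C,D))$ forces $D = f(F,B^{(m)})$ for some $F$, and hence equation (\ref{newlabel1}). This disposes of the first assertion, so only the claims $e(E) \ge_L e(F)$ and $\dege(E) = \dege(F)$ remain. Writing $\beta := e(B^{(m)})$, which is monic of degree $3^m$, and setting $G := e(f(E,B^{(m)})) = e(E)^2 + \beta^3$ and $H := e(f(F,B^{(m)})) = e(F)^2 + \beta^3$, the bound $\dege(E) \le \dege(B^{(m)})$ together with $2\,\dege(F) \le 3\,\dege(B^{(m)})$ (the latter because $B^{(m)}$ contains a core of $D$) makes both $G$ and $H$ monic of degree $3^{m+1}$. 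Equation (\ref{newlabel1}) then reads $e(A)^2 + G^3 = e(C)^2 + H^3$. If $G = H$, then $e(E)^2 = e(F)^2$, so $e(E) = e(F)$ and both conclusions are immediate; thus I may assume $G \ne H$, equivalently $e(E) \ne e(F)$.

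Next I would feed (\ref{newlabel1}) into Lemma \ref{e-equiv condition}, taking its ``$A,B,C,D$'' to be $A$, $f(E,B^{(m)})$, $C$, $f(F,B^{(m)})$; the inequality $\dege(A) \le \dege(f(E,B^{(m)}))$ is part of the standing hypothesis and $e(f(E,B^{(m)})) \ne e(f(F,B^{(m)}))$ holds by the previous paragraph. The lemma then yields $\dege(C) > \dege(A)$ and $G >_L H$. Subtracting $\beta^3$ from both sides (lexicographic order is unchanged by adding a common polynomial) gives $e(E)^2 >_L e(F)^2$. To pass to $e(E) >_L e(F)$ I would record the small fact that on polynomials with nonnegative coefficients squaring is strictly $>_L$-monotone: if $p >_L q$ then, writing $p^2 - q^2 = (p-q)(p+q)$, the factor $p - q$ has positive leading coefficient and $p+q$ has positive leading coefficient, so $p^2 - q^2$ has positive leading coefficient, i.e. $p^2 >_L q^2$; the direction I need then follows by trichotomy. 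Hence $e(E) >_L e(F)$, which gives the asserted $e(E) \ge_L e(F)$ and also $\dege(E) \ge \dege(F)$, since a strictly larger $\dege(F)$ would reverse the $>_L$ comparison.

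It remains to upgrade $\dege(E) \ge \dege(F)$ to equality, and this is where I expect the real work. Suppose for contradiction that $\dege(E) > \dege(F)$ (so in particular $G \ne H$). Rearranging $e(A)^2 + G^3 = e(C)^2 + H^3$ and factoring gives
\[
(G-H)(G^2+GH+H^2) = e(C)^2 - e(A)^2 .
\]
On the left, $G - H = e(E)^2 - e(F)^2$ has degree $2\,\dege(E)$ and leading coefficient $\lead(e(E))^2$ (here the strict inequality $\dege(E) > \dege(F)$ is essential), while $G^2 + GH + H^2$ has degree $6\cdot 3^m$ and leading coefficient $3$ because $G,H$ are monic; on the right, $\dege(C) > \dege(A)$ gives leading coefficient $\lead(e(C))^2$. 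Comparing the leading coefficients of these equal, nonzero polynomials forces
\[
\lead(e(C))^2 = 3\,\lead(e(E))^2 ,
\]
which is impossible for positive integers since $\sqrt{3}$ is irrational. This contradiction yields $\dege(E) = \dege(F)$. The crux is precisely this last computation: factoring $G^3 - H^3$ and invoking that $3$ is not a perfect square is what rules out a degree drop, whereas the lexicographic half of the statement comes essentially for free from Lemma \ref{e-equiv condition} and the monotonicity of squaring.
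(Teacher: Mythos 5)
Your proof is correct and takes essentially the same route as the paper's: the reduction $D = f(F,B^{(m)})$ from the discussion preceding the proposition, Lemma \ref{e-equiv condition} to get $\dege(C) > \dege(A)$ and $e(E) >_L e(F)$ in the case $e(E) \ne e(F)$, and then a leading-coefficient comparison under the assumption $\dege(E) > \dege(F)$ that forces $\lead(e(C))^2 = 3\,\lead(e(E))^2$, contradicting the irrationality of $\sqrt{3}$. The only difference is presentational: you organize the final computation by factoring $G^3 - H^3 = (G-H)(G^2+GH+H^2)$, while the paper expands the cubes and cancels the common $e(B^{(m)})^9$ before comparing top terms; both arrive at the identical contradiction.
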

\begin{proof}
We have already proved the first part of the conclusion in the discussion leading up to (\ref{newlabel1}), so it remains to show that $e(E) \ge_L e(F)$ and $\dege(E) = \dege(F)$. If $e(F) = e(E)$, then we are done. Suppose $e(F) \ne e(E)$. Then $e(f(F,B^{(m)})) \ne e(f(E,B^{(m)}))$. By Lemma \ref{e-equiv condition} we must have $\dege(C) > \dege(A)$ and $e(f(E,B^{(m)})) >_L e(f(F,B^{(m)}))$, and it follows that \begin{equation}\label{lexico greater} e(E) >_L e(F).\end{equation} Therefore, if $\dege(E) \ne \dege(F)$, we must have $\dege(E) > \dege(F)$; so let us assume $\dege(E) > \dege(F)$ and obtain a contradiction. We have \[e(A)^2 + (e(E)^2 + e(B^{(m)})^3)^3 = e(C)^2 + (e(F)^2 + e(B^{(m)})^3)^3.\] After cancelling out the common $e(B^{(m)})^9$ from both sides, we see that the degree of the left-hand side is $\deg(e(E)^2e(B^{(m)})^6)$ and the highest-degree term on the right-hand side must be $e(C)^2$ because $\deg(e(F)^2e(B^{(m)})^6) < \deg(e(E)^2e(B^{(m)})^6)$. This means that \begin{equation}\label{deg equality} 2\,\dege(C) = \deg(e(E)^2e(B^{(m)})^6). \end{equation} Then the coefficient of the highest degree term of the left-hand side must be $\lead(3e(E)^2e(B^{(m)})^6) = 3\,\lead(e(E))^2\lead(e(B^{(m)}))^6 = 3\,\lead(e(E))^2$, and the coefficient of the highest degree term of the right-hand side must be $\lead(e(C)^2) = \lead(e(C))^2$. We must have $\lead(e(C))^2 = 3\,\lead(e(E))^2$, from which it follows that \begin{equation}\label{lead equality} \lead(e(C)) = \sqrt{3}\,\lead(e(E)), \end{equation} which is not even rational. This is a contradiction, so we must have $\dege(E) = \dege(F)$.
\end{proof}
\begin{cor}
\label{lexico min branch}
Under the hypotheses of Proposition \ref{determination of dege}, if $E$ induces the lexicographically minimal polynomial with degree $\dege(E)$, then $F = E$ and $e(C) = e(A)$.
\end{cor}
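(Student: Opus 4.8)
The plan is to feed the two conclusions of Proposition \ref{determination of dege} into the lexicographic minimality of $E$, thereby pinning down $e(F)$ exactly, and then to cancel a common cube to recover $e(C) = e(A)$.

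First I would apply Proposition \ref{determination of dege} directly: it yields $D = f(F,B^{(m)})$ for some $F \in \term(f;x)$, together with the comparison $e(E) \ge_L e(F)$ and the degree equality $\dege(F) = \dege(E)$. Since $F$ is then an $f$-expression whose induced polynomial has degree exactly $\dege(E)$, the standing hypothesis that $e(E)$ is lexicographically minimal among all induced polynomials of that degree supplies the opposite comparison $e(E) \le_L e(F)$. The two inequalities squeeze together and force $e(E) = e(F)$.

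Next I would promote this equality of polynomials to an equality of $f$-expressions. By Lemma \ref{lexico min f-expression}, the $f$-expression realizing the lexicographically minimal polynomial of a given degree is $e$-isolated; as $E$ is exactly this expression, $e(F) = e(E)$ forces $F = E$, which is the first half of the conclusion. For the second half, I would substitute $D = f(F,B^{(m)})$ into the hypothesis $e(f(A,f(E,B^{(m)}))) = e(f(C,D))$, rewriting it as
\[ e(A)^2 + (e(E)^2 + e(B^{(m)})^3)^3 = e(C)^2 + (e(F)^2 + e(B^{(m)})^3)^3. \]
Since $e(F) = e(E)$, the two cubed terms coincide and cancel, leaving $e(A)^2 = e(C)^2$; because induced polynomials have positive leading coefficient, taking square roots yields $e(C) = e(A)$.

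The whole argument is brief once Proposition \ref{determination of dege} is in hand, and the only delicate point is the implication $e(F) = e(E) \Rightarrow F = E$. The bare polynomial equality $e(F) = e(E)$ already suffices for the cancellation that produces $e(C) = e(A)$, so the subtlety lies entirely in the sharper conclusion $F = E$ of expressions, which genuinely requires the $e$-isolation clause of Lemma \ref{lexico min f-expression} rather than merely its minimality clause.
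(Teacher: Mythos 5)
Your proposal is correct and follows essentially the same route as the paper: combine the comparison $e(E) \ge_L e(F)$ and degree equality from Proposition \ref{determination of dege} with lexicographic minimality to force $e(F) = e(E)$, then invoke the $e$-isolation clause of Lemma \ref{lexico min f-expression} to get $F = E$, with $e(C) = e(A)$ following by cancellation. The paper compresses the last two steps into ``the conclusion follows by Lemma \ref{lexico min f-expression},'' so your explicit cancellation of the common cube and the sign argument via positive leading coefficients merely spell out what the paper leaves implicit.
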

\begin{proof}
We must have $e(E) = e(F)$ or $e(E) <_L e(F)$. Since $e(E) \ge_L e(F)$ by Proposition \ref{determination of dege}, this forces $e(F) = e(E)$, from which the conclusion follows by Lemma \ref{lexico min f-expression}.
\end{proof}

The next proposition will be a generalization of Proposition \ref{determination of dege}. We will demonstrate in Corollary \ref{lexico min branch gen} that $(\ref{partial determination})$ follows even if $f(E,B^{(m)})$ assumes values in a more general class of $f$-expressions than that in Corollary \ref{lexico min branch}.

\begin{notn}
\label{tree branch notn}
For every integer $1 \le j \le m$ define the function $Y_j: \term(f;x)$ $\longrightarrow$ $\term(f;x)$ by $Y_j(A) = f(A,B^{(j)})$ for every $A \in \term(f;x)$; in other words, we can write $Y_j = f(-,B^{(j)})$ as a one-argument function.
\end{notn}
\begin{prop}
\label{determination of dege gen}
Assume Notation \ref{tree branch notn} above. Let $1 \le j \le m$ be a positive integer and $d_1$, \ldots, $d_{j-1}$, $d_j$ be a sequence of positive integers such that $m = d_1 > d_2 > \ldots > d_{j-1} > d_j \ge 1$. Let $U \in \term(f;x)$ with $\dege(U) \le \dege(B^{(d_j)}) = 3^{d_j}$, and let $E = Y_{d_2}(Y_{d_3}(\ldots Y_{d_j}(U) \ldots))$, so $f(E,B^{(m)}) = Y_{d_1}(E) = Y_{d_1}(Y_{d_2}(\ldots Y_{d_j}(U) \ldots)) = Y_{d_1} \circ Y_{d_2} \circ \ldots \circ Y_{d_j}(U)$. Let $A \in \term(f;x)$ be such that $\dege(A) \le \dege(f(E,B^{(m)}))$. Suppose that $e(f(C,D)) = e(f(A,f(E,B^{(m)})))$. Then there exists $V \in \term(f;x)$ such that $D = Y_{d_1} \circ Y_{d_2} \circ \ldots \circ Y_{d_j}(V)$, $e(U) \geq_L e(V)$, and $\dege(V) = \dege(U)$.
\end{prop}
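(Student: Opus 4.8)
The plan is to induct on the chain length $j$, with the base case $j=1$ being exactly Proposition \ref{determination of dege} (take $U=E$, $d_1=m$, and rename the $F$ produced there to $V$). Before starting I would record the degree bookkeeping that makes the hypotheses fit: writing $E_k = Y_{d_k}\circ\cdots\circ Y_{d_j}(U)$, a short induction using $\dege(U)\le 3^{d_j}$ and $d_k+1\le d_{k-1}$ gives $\dege(E_k)=3^{d_k+1}$, so that $\dege(E)=3^{d_2+1}$, $\dege(f(E,B^{(m)}))=3^{m+1}$, and each $E_k$ is monic. In particular $\dege(E)\le\dege(B^{(m)})$, so the discussion preceding Proposition \ref{determination of dege} applies and shows $B^{(m)}$ is $e$-isolated with respect to $f(A,f(E,B^{(m)}))$.

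For the inductive step I would regroup the chain at its innermost end. Set $\hat U = Y_{d_j}(U) = f(U,B^{(d_j)})$, so that $\dege(\hat U)=3^{d_j+1}\le 3^{d_{j-1}}$ and $\hat U$ is monic, and note that $f(E,B^{(m)}) = Y_{d_1}\circ\cdots\circ Y_{d_{j-1}}(\hat U)$ is a $(j-1)$-fold composition with innermost argument $\hat U$. Applying the inductive hypothesis to the sequence $d_1>\cdots>d_{j-1}$, with $\hat U$ in the role of $U$ and $A$ unchanged, the equation $e(f(C,D))=e(f(A,f(E,B^{(m)})))$ yields some $W\in\term(f;x)$ with $D = Y_{d_1}\circ\cdots\circ Y_{d_{j-1}}(W)$, $e(\hat U)\ge_L e(W)$, and $\dege(W)=\dege(\hat U)=3^{d_j+1}$; since $\hat U$ is monic of this degree and $e(\hat U)\ge_L e(W)$, the expression $W$ is monic as well. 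It then remains to peel off the single innermost level, i.e.\ to show $W = f(V,B^{(d_j)})$ for some $V$ with $e(U)\ge_L e(V)$ and $\dege(V)=\dege(U)$, after which $D = Y_{d_1}\circ\cdots\circ Y_{d_{j-1}}(f(V,B^{(d_j)})) = Y_{d_1}\circ\cdots\circ Y_{d_j}(V)$, as required.

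The comparison half of the final peel I expect to be routine once the structure of $W$ is known: since $D$ and $f(E,B^{(m)})$ share the entire outer chain $Y_{d_1}\circ\cdots\circ Y_{d_{j-1}}$ and differ only in the innermost left argument, Lemma \ref{e-equiv condition} applied to the full equation (using $\dege(A)\le\dege(f(E,B^{(m)}))$) propagates a single lexicographic inequality down the common tower of right children $B^{(d_1)},\ldots,B^{(d_{j-1})}$ and reproduces $e(\hat U)\ge_L e(W)$, hence $e(U)\ge_L e(V)$; the equality $\dege(V)=\dege(U)$ then follows from the leading-coefficient computation of Proposition \ref{determination of dege} (the obstruction $\lead(e(C))=\sqrt 3\,\lead(e(E))$), now carried out at the innermost level.

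The hard part will be forcing the innermost right child to be exactly $B^{(d_j)}$, that is, that $W = f(\cdot,B^{(d_j)})$ rather than merely $f(\cdot,M)$ for some monic $M$ of degree $3^{d_j}$. The difficulty is that $B^{(d_j)}$ lies off the development spine of the global expression, so it is not reachable by the ``$e$-isolated with respect to'' mechanism directly, and the polynomial identity does not reduce, by cancellation of the common factor $e(B^{(m)})^3$, to a standalone identity between $W$ and $\hat U$. My plan to overcome this is to use the $\maxt$/degree-gap bookkeeping: because the degree-gaps are preserved in the expansion (the computations around (\ref{max term}) and (\ref{max term rev})), the accumulated gaps down the shared chain locate, at a precise degree in $e(f(A,f(E,B^{(m)})))$, the highest monomial carrying a factor from the innermost argument; matching this degree across $e(f(C,D))=e(f(A,f(E,B^{(m)})))$, together with the lexicographic minimality and $e$-isolation of $B^{(d_j)}$ (Lemma \ref{lexico min f-expression}, Corollary \ref{x left appendage separation}, and Lemmas \ref{gleftx}--\ref{maxt mixed x}), should pin the innermost right child to $B^{(d_j)}$ and thereby reduce the final peel to the base case Proposition \ref{determination of dege}.
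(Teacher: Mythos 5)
Your inductive skeleton is exactly the paper's: induct on $j$, regroup $f(E,B^{(m)})$ as $Y_{d_1}\circ\cdots\circ Y_{d_{j-1}}(\hat U)$ with $\hat U = f(U,B^{(d_j)})$, apply the inductive hypothesis with $\hat U$ in the role of $U$ to obtain $W$ (the paper's $\tilde V$) satisfying $D = Y_{d_1}\circ\cdots\circ Y_{d_{j-1}}(W)$, $e(\hat U)\ge_L e(W)$ and $\dege(W)=\dege(\hat U)$, then peel off the innermost level. The two halves of the peel you call routine are essentially the paper's as well: once $W=f(V,B^{(d_j)})$ is known, cancelling $e(B^{(d_j)})^3$ in $e(\hat U)\ge_L e(W)$ gives $e(U)\ge_L e(V)$, and for $\dege(V)=\dege(U)$ the paper expands the \emph{global} identity, cancels every term free of $e(U)^2$ and $e(V)^2$, and derives $\lead(e(C))=\sqrt{3\cdot 2^{j-1}}\,\lead(e(U))$. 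Note the extra factor $2^{j-1}$ coming from the $j-1$ squarings along the chain, which your ``$\sqrt 3$ carried out at the innermost level'' glosses over; this is harmless since $3\cdot 2^{j-1}$ is never a perfect square, but it has to be checked.

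The genuine gap is at the step you yourself flag as the hard part, and the route you sketch for it would not work. The global route is a dead end for the very reason you give: $B^{(d_j)}$ lies off the development spine of $f(A,f(E,B^{(m)}))$, so the $\maxt$/degree-gap formulas and Corollary \ref{x left appendage separation} (which govern only siblings of spine subexpressions) cannot reach it; and your concluding ``reduction to the base case Proposition \ref{determination of dege}'' is impossible, because that proposition consumes an honest polynomial identity, and no standalone identity between $\hat U$ and $W$ exists. What you missed is that no identity is needed: the inductive hypothesis already hands you the standalone lexicographic \emph{inequality} $e(\hat U)\ge_L e(W)$, and that is the entire input required. Write $W=f(V,M)$ --- this form is forced, with $\dege(M)=3^{d_j}$, because $\dege(W)=3^{d_j+1}$ is odd, so the cube of the right child must dominate --- and write $\hat U = f(U,f(x',B^{(d_j-1)}))$ with $x':=x$. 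Since $\dege(U)\le 3^{d_j}$ gives $\deg(\maxt(U)) = 2\,\dege(U)\le 2\cdot 3^{d_j} < 2+2\cdot 3^{d_j} = \deg(\maxt(x'))$, Lemma \ref{left app lex min} --- the one lemma in this circle of ideas you never cite --- applies with $A:=\hat U$, $\bar A := W$, $C:=M$: if $M\ne B^{(d_j)}$, then $e(\hat U) <_L e(W)$, contradicting the inductive hypothesis. Hence $M=B^{(d_j)}$, and the peel closes as you describe. In short, the correct resolution of your hard part is a two-line application of Lemma \ref{left app lex min} to the standalone pair $(\hat U, W)$, not bookkeeping inside the global expansion.
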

\begin{proof}
We will prove this by induction on $j$. We have already proved the case $j = 1$ in Proposition \ref{determination of dege}. Now assume that the statement of this proposition holds for all $1 \le k \le j - 1$. In what follows, we will prove this proposition for $j$. 

Since $e(f(C,D)) = e(f(A,f(E,B^{(m)})))$ and $f(E,B^{(m)}) = Y_{d_1} \circ Y_{d_2} \circ \ldots \circ Y_{d_{j-1}}(f(U,B^{(d_j)}))$, it follows from our inductive hypothesis (with $f(U,B^{(d_j)})$ in the role of $U$) that $D = Y_{d_1} \circ Y_{d_2} \circ \ldots \circ Y_{d_{j-1}}(\tilde{V})$ for some $\tilde{V} \in \term(f;x)$ such that $e(f(U,B^{(d_j)})) \geq_L e(\tilde{V})$ and $\dege(\tilde{V}) = \dege(f(U,B^{(d_j)}))$. We claim that $\tilde{V} = f(V,B^{(d_j)})$ for some $V \in \term(f;x)$ such that $e(V) \le_L e(U)$. Notice that we must have $\tilde{V} = f(V,W)$ for some $V,W \in \term(f;x)$, where $\dege(W) = \dege(B^{(d_j)})$. Notice also that $f(U,B^{(d_j)}) = f(U,f(x',B^{(d_j-1)}))$ where $x' := x$, and $\deg(e(U)^2) = 2\,\dege(U) \le 2\,\dege(B^{(d_j)}) < 2 + 2\,\dege(B^{(d_j)}) = \deg(x'^2) + 2\,\deg(e(B^{(d_j-1)})^3) = \deg(\maxt(x'))$. Suppose $W \ne B^{(d_j)}$. Then by Lemma \ref{left app lex min} we have $e(f(U,B^{(d_j)})) <_L e(f(V,W)) = e(\tilde{V})$, which is a contradiction. Hence we must have $W = B^{(d_j)}$. Since $e(f(U,B^{(d_j)})) \geq_L e(\tilde{V}) = e(f(V,B^{(d_j)}))$, we have $e(V) \le_L e(U)$ as claimed.

Now we want to show that $\dege(V) = \dege(U)$. Suppose \begin{equation}\label{newlabel2} \dege(V) < \dege(U). \end{equation} We will show that this leads to a contradiction. We have $e(f(A,f(E,B^{(m)}))) = e(f(A,Y_{d_1} \circ Y_{d_2} \circ \ldots \circ Y_{d_j}(U))) = e(A)^2 + ((( \ldots ((e(U)^2 + e(B^{(d_j)})^3)^2 + e(B^{(d_{j-1})})^3)^2  \ldots )^2 + e(B^{(d_2)})^3)^2 + e(B^{(d_1)})^3)^3 = e(C)^2 + ((( \ldots ((e(V)^2 + e(B^{(d_j)})^3)^2 + e(B^{(d_{j-1})})^3)^2  \ldots )^2 + e(B^{(d_2)})^3)^2 + e(B^{(d_1)})^3)^3 = e(f(C,Y_{d_1} \circ Y_{d_2} \circ \ldots \circ Y_{d_j}(V))) = e(f(C,D))$. Notice that, in the polynomial expansion of the preceeding five equalities, the terms (excluding the $e(A)^2$ and $e(C)^2$) that do not contain $e(U)^2$ as a factor or $e(V)^2$ as a factor, $(e(B^{(d_2)})^3)^2(e(B^{(d_1)})^3)^2$ for example, are common to both sides of the third equality and can thus be subtracted off from these two sides. Subtracting off these common terms leaves \[\deg(e(U)^2e(B^{(d_j)})^3e(B^{(d_{j-1})})^3 \ldots e(B^{(d_2)})^3e(B^{(d_1)})^6)\] as the degree of the left-hand side of the third equality. Since \[\deg(e(V)^2e(B^{(d_j)})^3e(B^{(d_{j-1})})^3 \ldots e(B^{(d_2)})^3e(B^{(d_1)})^6)\] is less than the degree of the left-hand side of the third equality by (\ref{newlabel2}), it follows that $2\,\dege(C)$ must equal the degree of the left-hand side of the third equality. We see from the third equality that the coefficient of the highest degree term of the left-hand side is $\lead(3 \cdot 2^{j-1}e(U)^2e(B^{(d_j)})^3e(B^{(d_{j-1})})^3 \ldots e(B^{(d_2)})^3e(B^{(d_1)})^6) = 3 \cdot 2^{j-1}\lead(e(U))^2\lead(e(B^{(d_1)}))^6 = 3 \cdot 2^{j-1}\lead(e(U))^2$ and that the coefficient of the highest degree term of the right-hand side is $\lead(e(C)^2) = \lead(e(C))^2$. It follows that we must have $\lead(e(C))^2 = 3 \cdot 2^{j-1}\lead(e(U))^2$, which implies that $\lead(e(C)) = \sqrt{3 \cdot 2^{j-1}}\,\lead(e(U))$, which is not even rational. This is a contradiction, so we must have $\dege(U) = \dege(V)$. This completes the induction.
\end{proof}
\begin{cor}
\label{lexico min branch gen}
Under the hypotheses of Proposition \ref{determination of dege gen}, if $e(U)$ is lexicographically minimal among polynomials with degree $\dege(U)$ induced by $f$-expressions, then $V = U$ and $e(A) = e(C)$.
\end{cor}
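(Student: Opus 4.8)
The plan is to mirror the argument of Corollary \ref{lexico min branch}, using Proposition \ref{determination of dege gen} in place of Proposition \ref{determination of dege}. Proposition \ref{determination of dege gen} already hands us a $V \in \term(f;x)$ with $D = Y_{d_1} \circ Y_{d_2} \circ \ldots \circ Y_{d_j}(V)$, together with the two relations $e(U) \ge_L e(V)$ and $\dege(V) = \dege(U)$. The entire content of the corollary is to upgrade these to the exact identities $V = U$ and $e(A) = e(C)$ by feeding in the hypothesis of lexicographic minimality.

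First I would observe that, since $\dege(V) = \dege(U)$, the polynomial $e(V)$ is one of the polynomials of degree $\dege(U)$ induced by an $f$-expression. By the assumed lexicographic minimality of $e(U)$ among exactly these polynomials, we have $e(U) \le_L e(V)$. Combining this with $e(U) \ge_L e(V)$ from Proposition \ref{determination of dege gen} forces $e(U) = e(V)$.

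Next I would promote the equality of induced polynomials $e(U) = e(V)$ to an equality of $f$-expressions $U = V$. Lemma \ref{lexico min f-expression} asserts that the $f$-expression inducing the lexicographically minimal polynomial of a given degree is $e$-isolated; hence $U$, being that minimal expression for degree $\dege(U)$, cannot be $e$-equivalent to any other $f$-expression. (If $U = x$ this is immediate, since $x$ is the only $f$-expression of degree $1$; otherwise $\dege(U) = 2^a3^b$ with $b \ge 1$, as every $f$-expression other than $x$ has degree divisible by $3$, and Lemma \ref{lexico min f-expression} applies directly.) Thus $e(V) = e(U)$ yields $V = U$.

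Finally, substituting $V = U$ into $D = Y_{d_1} \circ \ldots \circ Y_{d_j}(V)$ gives $D = Y_{d_1} \circ \ldots \circ Y_{d_j}(U) = f(E,B^{(m)})$ as $f$-expressions. Plugging this back into $e(f(C,D)) = e(f(A,f(E,B^{(m)})))$ and cancelling the common $e(f(E,B^{(m)}))^3$ leaves $e(C)^2 = e(A)^2$; since both $e(C)$ and $e(A)$ are induced polynomials with positive leading coefficients, this gives $e(C) = e(A)$, completing the proof. I expect no genuine obstacle once Proposition \ref{determination of dege gen} is available: the only point requiring a moment's care is the passage from $e(U) = e(V)$ to $V = U$, which is precisely where the $e$-isolation clause of Lemma \ref{lexico min f-expression} is needed, since lexicographic minimality of the polynomial alone would not force equality of the underlying $f$-expressions.
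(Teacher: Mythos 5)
Your proposal is correct and follows essentially the same route as the paper: combine $e(U) \ge_L e(V)$ from Proposition \ref{determination of dege gen} with the lexicographic minimality of $e(U)$ to force $e(V) = e(U)$, then invoke the $e$-isolation clause of Lemma \ref{lexico min f-expression} to get $V = U$ and conclude $e(A) = e(C)$ by cancellation. The only difference is that you spell out the final step (including the degenerate case $U = x$ and the cancellation of $e(f(E,B^{(m)}))^3$), which the paper compresses into ``the conclusion follows by Lemma \ref{lexico min f-expression}.''
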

\begin{proof}
We must have $e(V) = e(U)$ or $e(V) >_L e(U)$. Since $e(U) \geq_L e(V)$ by Proposition \ref{determination of dege gen}, we must have $e(V) = e(U)$, from which the conclusion follows by Lemma \ref{lexico min f-expression}.
\end{proof}

In the proof of Proposition \ref{determination of dege} (and analogously Proposition \ref{determination of dege gen}), we saw that, under the hypotheses of this proposition and given $\dege(E) > \dege(F)$, the subexpression $C$ is not able to ``make up'' for the difference between $e(f(E,B^{(m)}))$ and $e(f(F,B^{(m)}))$, and hence $e(f(C,f(F,B^{(m)}))) \neq e(f(A,f(E,B^{(m)})))$. We will analyze and make use of this phenomenon extensively in what follows, where we consider the developments of $f$-expressions more complicated than $A$ of Proposition \ref{e-isolated left appendage}.

Consider $A, B_0, E_1 \in \term(f;x)$ where $B_0$, $E_1$ are subexpressions of $A$, $\dege(A) = 2^p3^q$, $\dege(B_0) = 2^m3^n$, and $\dege(E_1) = 2^i3^j$. For the remainder of this paper, assume the following three
\begin{assmpt}
\label{newlabel3}
$B_0$ contains all the cores of $A$ and is $e$-isolated with respect to $A$.
\end{assmpt}
\begin{assmpt}
\label{newlabel4}
$A^{[m + n + 1]}$ is either $f(E_1,B_0)$ or $f(B_0,E_1)$. In other words, we can write $A = f(\ldots f(E_1,B_0) \ldots)$ or $A = f(\ldots f(B_0,E_1) \ldots)$ respectively.
\end{assmpt}
\begin{assmpt}
\label{newlabel5}
For every occurrence $x'$ of $x$ in $E_1$ and every occurrence $x''$ of $x$ in the ellipses of the expressions for $A$ shown in Assumption \ref{newlabel4} we have $\deg(\maxt(x')) > \deg(\maxt(x''))$.
\end{assmpt}
\begin{rem}
\label{third assumption}
Note that Assumption \ref{newlabel5} is a more general version of the corresponding hypothesis in Proposition \ref{e-isolated left appendage}.
\end{rem}
In the restricted version of Friedman's problem we will study below, we will show that whether or not $A^{[m + n + 1]}$ is $e$-isolated with respect to $A$ is related to the solution sets of certain exponential Diophantine equations.

In either of the cases $A = f(\ldots f(E_1,B_0) \ldots)$ or $A = f(\ldots f(B_0,E_1) \ldots)$ we have \begin{equation}\label{dgap expression} \dgap(E_1,B_0) = 2^{m+\pi_1}3^{n+\pi_2} - 2^{i+\pi_2}3^{j+\pi_1} \end{equation} where $\{\pi_1,\pi_2\} = \{0,1\}$. Note that $\pi_1 = 1$ corresponds to the case $A = f(\ldots f(B_0,E_1) \ldots)$ and $\pi_2 = 1$ corresponds to the case $A = f(\ldots f(E_1,B_0) \ldots)$. We could attempt to prove, as in Proposition \ref{e-isolated left appendage}, that $f(E_1,B_0)$ or $f(B_0,E_1)$ is $e$-isolated with respect to $A$, but this assertion, if true, may be very difficult to prove. Below, we will simply explore how this problem can be analyzed through the study of certain Diophantine equations. Here are some lemmas that will aid us in this effort.

\begin{lem}
\label{lexico min preservedleft}
Let $A = f(\ldots f(E_1,B) \ldots)$ and $A' = f(\ldots f(E_2,B) \ldots)$ be $f$-expressions such that $\dege(A) = 2^p3^q = \dege(A')$, $B$ contains all the cores of $A$, and $B$ contains at least one core of $A'$. Suppose that for every occurrence $x_1$ of $x$ in $E_1$ and for every occurrence $x_2$ of $x$ in the ellipses of $A$ we have $\deg(\maxt(x_1)) > \deg(\maxt(x_2))$. Suppose that $e(E_1) <_L e(E_2)$. Then $e(A) <_L e(A')$.
\end{lem}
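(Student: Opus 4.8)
The plan is to mirror the architecture of the proof of Lemma \ref{gleftx}, isolating the common ``pure-core'' part of $e(A)$ and $e(A')$ and then comparing the two remainders. Writing $3\,\dege(B) = 2^m3^n$, both $e(A)$ and $e(A')$ contain the summand $P := (e(B)^3)^{2^{p-m}3^{q-n}}$, since $B$ carries a core of each expression and $\dege(A) = 2^p3^q = \dege(A')$; moreover $P$ occurs with nonnegative coefficients in both, so $R_A := e(A) - P$ and $R_{A'} := e(A') - P$ have nonnegative coefficients. Because adjoining a common polynomial does not change the lexicographic order, it suffices to prove $R_A <_L R_{A'}$.

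First I would dispose of the cases where the two remainders have different degree. The hypothesis that every occurrence of $x$ in $E_1$ dominates every occurrence of $x$ in the ellipses of $A$ forces the highest-degree term of $R_A$ to be the one recorded by $\maxt(E_1)$, so $\deg(R_A) = \deg(\maxt(E_1)) = \dege(A) - \dgap(E_1,B)$ by the degree-gap preservation identity. On the $A'$ side, $R_{A'}$ dominates the monomial $\maxt(E_2)$, so $\deg(R_{A'}) \ge \dege(A') - \dgap(E_2,B)$. Whenever $\dege(E_1) < \dege(E_2)$ (one way in which $e(E_1) <_L e(E_2)$ can hold), or $A'$ possesses a core outside $B$, or the ellipses of $A'$ reach high enough, we get $\deg(R_{A'}) > \deg(R_A)$, and then $R_A <_L R_{A'}$ is immediate from the positive leading term of $R_{A'}$. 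This leaves the genuine case $\dege(E_1) = \dege(E_2)$ with $\deg(R_A) = \deg(R_{A'}) =: \delta$.

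For the tie I would track the $E_1$-contribution to $e(A)$ as a product $L\cdot e(E_1)^2$, where $L$ is the cofactor obtained by carrying the single block $e(E_1)^2$ up through the stages of the development above $f(E_1,B)$, and likewise write the $E_2$-contribution to $e(A')$ as $L'\cdot e(E_2)^2$. The key structural fact is that $\lead(L)$ telescopes: at each stage the cofactor is multiplied by $2\,\lead(e(B))^{e_k}$ or $3\,\lead(e(B))^{2e_k}$ according as the step squares or cubes, and in either case the exponent increment equals $e_{k+1}-e_k$, so the total exponent of $\lead(e(B))$ collapses to a quantity depending only on $\dege(A)$ and $\dege(B)$. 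Hence $L$ and $L'$ share the same leading coefficient, independent of the two ellipsis shapes, and (one checks) the same top part. Since $e(E_1) <_L e(E_2)$ with $\dege(E_1) = \dege(E_2)$, squaring preserves the order, giving $e(E_1)^2 <_L e(E_2)^2$ with first discrepancy at degree $e^\ast = \dege(E_1) + d_0$ (where $d_0 = \deg(e(E_1) - e(E_2))$) and $e(E_2)^2$ the larger; multiplying by the common-topped $L$ relocates this discrepancy to degree $d^\ast = \deg(L) + e^\ast$, where $R_{A'}$ exceeds $R_A$.

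The final and hardest step is a confinement estimate: I must check that no lower-order debris reaches degree $d^\ast$ to overturn this comparison, i.e.\ that every ellipsis-involving monomial of $e(A)$, and every degree at which $L$ departs from $L'$, lies strictly below $d^\ast$, whereas the corresponding contributions on the $A'$ side only add nonnegative mass and can only help. This is precisely the role of the maxt hypothesis, and I would establish it by bounding $\deg(\maxt(x_2))$ for the ellipsis occurrences $x_2$ against $d^\ast$ through the identity $\dege(A) - \deg(\maxt(\,\cdot\,)) = \dgap(\,\cdot\,,B)$. Once confinement holds, the highest degree at which $R_A$ and $R_{A'}$ differ is exactly $d^\ast$, at which $R_{A'}$ is strictly larger, so $R_A <_L R_{A'}$ and therefore $e(A) <_L e(A')$. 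I expect this confinement to be the main obstacle, since locating $d^\ast$ relative to the individual $\maxt(x_2)$'s forces one to unwind the internal leaf structure of $E_1$ (comparing its minimal leaf-reach with $d_0$), while the telescoping identity for $\lead(L)$ and the lexicographic monotonicity of squaring are routine.
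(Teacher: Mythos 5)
You have the right global architecture (split off a polynomial common to $e(A)$ and $e(A')$, then compare the nonnegative remainders lexicographically), but your choice of common part is weaker than the paper's, and that single choice is what generates all the difficulties your proposal then faces. The paper does not subtract the pure power $(e(B)^3)^{N}$, $N = \frac{2^p3^q}{3\,\dege(B)}$; it first splits the two squares themselves, writing $e(E_1)^2 = p_1 + q$ and $e(E_2)^2 = p_2 + q$ with $\deg(p_1) < \deg(p_2)$ (such a splitting exists because $e(E_1) <_L e(E_2)$ implies $e(E_1)^2 <_L e(E_2)^2$), and then subtracts $(q + e(B)^3)^{N}$ from both $e(A)$ and $e(A')$. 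With that choice there is no ``tie'' case at all: the $A$-side remainder has degree $\deg(p_1) + (N-1)\,3\,\dege(B)$, the $A'$-side remainder has degree at least $\deg(p_2) + (N-1)\,3\,\dege(B)$, and the lemma follows from this one strict degree inequality. Your cofactors $L$, $L'$, the telescoping of $\lead(L)$, the lexicographic monotonicity of squaring, and the relocation of the first discrepancy to $d^{\ast}$ are all rendered unnecessary by absorbing $q$ into the common part; that is exactly what the paper's decomposition buys.

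As it stands, your proposal has two genuine gaps. First, the $E_1$-contribution to $e(A)$ is not $L \cdot e(E_1)^2$ for a cofactor independent of $E_1$: the actual cofactor is $\sum_{k \ge 1}\binom{N}{k}e(E_1)^{2(k-1)}(e(B)^3)^{N-k}$, with further terms involving ellipsis factors, so ``$L$ and $L'$ share the same top part'' is a claim of the same species as the lemma itself, not something one merely checks; it does hold, because $2\,\dege(E_1) < 3\,\dege(B)$ forces the $k \ge 2$ discrepancies below $d^{\ast}$, but you give no argument. Second, and decisively, the confinement estimate --- which you yourself flag as the main obstacle --- is never carried out, and a proof that defers its hardest step is not a proof. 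Your diagnosis of that obstacle is in fact accurate: confinement amounts to showing that the ellipsis terms of $A$, which the hypothesis bounds only by $\min_{x_1} \deg(\maxt(x_1))$ over occurrences $x_1$ in $E_1$, lie below the discrepancy degree, and this comes down to comparing that minimal reach with $d_0 = \deg(e(E_1) - e(E_2))$. Be aware that the paper's own proof treats this point only implicitly, when it asserts the equality $\deg(e(A) - (q + e(B)^3)^{N}) = \deg(p_1) + (N-1)\,3\,\dege(B)$ without discussing whether ellipsis terms could inflate the left-hand side. So you have correctly located the crux, but your route leaves that crux open and stacks an unproven cofactor analysis on top of it, whereas the paper's decomposition reduces the entire lemma to that single point.
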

\begin{proof}
We can write $e(E_1)^2 = p_1(x) + q(x)$ and $e(E_2)^2 = p_2(x) + q(x)$, where $p_1(x)$, $p_2(x)$, $q(x)$ are polynomials and $\deg(p_1(x)) < \deg(p_2(x))$. Examining $(e(E_1)^2 + e(B)^3)^{\frac{2^p3^q}{3\,\dege(B)}}$ from $e(A)$ and $(e(E_2)^2 + e(B)^3)^{\frac{2^p3^q}{3\,\dege(B)}}$ from $e(A')$, we see that $(q(x) + e(B)^3)^{\frac{2^p3^q}{3\,\dege(B)}}$ is common to both $e(A)$ and $e(A')$. We have $\deg(e(A) - (q(x) + e(B)^3)^{\frac{2^p3^q}{3\,\dege(B)}}) = \deg(p_1(x)) + (\frac{2^p3^q}{3\,\dege(B)} - 1)(3\,\dege(B)) < \deg(p_2(x)) + (\frac{2^p3^q}{3\,\dege(B)} - 1)(3\,\dege(B)) \le \deg(e(A') - (q(x) + e(B)^3)^{\frac{2^p3^q}{3\,\dege(B)}})$, from which the conclusion follows.
\end{proof}

Analogously we have the following
\begin{lem}
\label{lexico min preservedright}
Let $A = f(\ldots f(B,E_1) \ldots)$ and $A' = f(\ldots f(B,E_2) \ldots)$ be $f$-expressions such that $\dege(A) = 2^p3^q = \dege(A')$, $B$ contains all the cores of $A$, and $B$ contains at least one core of $A'$. Suppose that for every occurrence $x_1$ of $x$ in $E_1$ and for every occurrence $x_2$ of $x$ in the ellipses of $A$ we have $\deg(\maxt(x_1)) > \deg(\maxt(x_2))$. Suppose that $e(E_1) <_L e(E_2)$. Then $e(A) <_L e(A')$.
\end{lem}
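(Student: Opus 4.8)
The plan is to transcribe the proof of Lemma~\ref{lexico min preservedleft} almost line for line, changing only the fact that $E_1$ and $E_2$ now occupy the \emph{right} argument of the distinguishing subexpression, and hence enter through a \emph{cube} rather than a square. Concretely, since $B$ contains all the cores of $A$ (and at least one core of $A'$), the highest-degree term of $e(f(B,E_1))$ and of $e(f(B,E_2))$ comes from $B$, so $\dege(f(B,E_1)) = \dege(f(B,E_2)) = 2\,\dege(B)$; write $N := \frac{2^p3^q}{2\,\dege(B)}$ for the product of squarings and cubings carrying this stage up to $A$ (resp.\ $A'$). The relevant local polynomials are now $e(f(B,E_1)) = e(B)^2 + e(E_1)^3$ and $e(f(B,E_2)) = e(B)^2 + e(E_2)^3$, so every occurrence of the square $e(E_i)^2$ in the left-hand lemma is replaced by the cube $e(E_i)^3$, and every occurrence of the factor $3\,\dege(B) = \dege(f(E_i,B))$ is replaced by $2\,\dege(B) = \dege(f(B,E_i))$.

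First I would record the one substantive point, namely that cubing preserves the strict lexicographic order: from $e(E_1) <_L e(E_2)$ it follows that $e(E_1)^3 <_L e(E_2)^3$. This is the exact analogue of the (implicit) use of $e(E_1)^2 <_L e(E_2)^2$ in Lemma~\ref{lexico min preservedleft}, and it is checked by splitting off the common top part $s(x)$ shared by $e(E_1)$ and $e(E_2)$ above the highest degree $D$ at which they differ: there $e(E_2)$ carries the larger (nonnegative) coefficient, and because every coefficient in sight is nonnegative this advantage survives on passing to cubes, so $e(E_1)^3$ and $e(E_2)^3$ first differ at a higher degree at which $e(E_2)^3$ again carries the larger coefficient.

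Next I would decompose $e(E_1)^3 = p_1(x) + q(x)$ and $e(E_2)^3 = p_2(x) + q(x)$, choosing $q(x)$ to agree with the common high-degree part of the two cubes and to absorb the \emph{smaller} of the two coefficients at the top degree where they differ, so that $p_1, p_2$ have nonnegative coefficients and $\deg(p_1) < \deg(p_2)$; this choice is possible precisely because $e(E_1)^3 <_L e(E_2)^3$. The point, exactly as in the left-hand case, is that $(q(x) + e(B)^2)^N$ is then a common summand of both $e(A)$ and $e(A')$, being the contribution of the $B$-dominated structure developed up with the differing pieces $p_1, p_2$ suppressed. Invoking the hypothesis that every $x$ in the ellipses of $A$ has smaller $\maxt$-degree than every $x$ in $E_1$, together with the fact (established in the discussion preceding Lemma~\ref{gleftx}) that the degree-gap is preserved under development, I would compute the residual degree on the $A$-side exactly,
\[ \deg\!\big(e(A) - (q(x)+e(B)^2)^N\big) = \deg(p_1) + (N-1)\,(2\,\dege(B)), \]
while the corresponding residual on the $A'$-side is only bounded below,
\[ \deg\!\big(e(A') - (q(x)+e(B)^2)^N\big) \ge \deg(p_2) + (N-1)\,(2\,\dege(B)). \]
Since $\deg(p_1) < \deg(p_2)$, the $A$-residual has strictly smaller degree than the $A'$-residual, so at the highest degree at which $e(A)$ and $e(A')$ differ the coefficient of $e(A)$ is the smaller one, giving $e(A) <_L e(A')$.

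The main obstacle is not the arithmetic, which transcribes routinely from the left-hand case, but verifying that the two structural inputs genuinely survive the switch from a square to a cube: that cubing (not merely squaring) still preserves the strict lexicographic inequality, and that the max-term hypothesis still isolates the leading residual monomial on the $A$-side exactly when the appended variable enters through $e(E_1)^3$ rather than $e(E_1)^2$. Both reduce to the observations that all coefficients in sight are nonnegative and that $\maxt$-degrees, hence degree-gaps, are preserved under further development, exactly as in the discussion preceding Lemma~\ref{gleftx}.
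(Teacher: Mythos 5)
Your proof is correct and is exactly what the paper intends: the paper states Lemma \ref{lexico min preservedright} with no proof beyond the word ``Analogously,'' and your transcription of the proof of Lemma \ref{lexico min preservedleft} --- replacing $e(E_i)^2$ by $e(E_i)^3$ and $3\,\dege(B)$ by $2\,\dege(B)$, with the common summand $(q(x)+e(B)^2)^{N}$ and the same degree comparison of the residuals --- is precisely that analogue. Your explicit verification that cubing preserves the strict lexicographic order (a point the paper leaves implicit even for squaring) only strengthens the write-up.
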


Suppose $\bar{A} \in \term(f;x)$ such that $e(\bar{A}) = e(A)$. Then we must have $\dege(\bar{A}) = \dege(A) = 2^p3^q$ and $\bar{A}^{[m+n]} = B_0$. Since $B_0$ contains all the cores of $A$, $B_0$ must contain all the cores of $\bar{A}$; otherwise we would have $\lead(e(\bar{A})) > \lead(e(A))$, which is a contradiction. $\bar{A}^{[m + n + 1]}$ must be either $f(E_2,B_0)$ or $f(B_0,E_2)$ for some $E_2 \in \term(f;x)$, so we have either $\bar{A} = f(\ldots f(E_2,B_0) \ldots)$ or $\bar{A} = f(\ldots f(B_0,E_2) \ldots)$. We will say that \textbf{$A$ and $\bar{A}$ have the same orientation at the $(m + n + 1)$st stage} if $A = f(\ldots f(E_1,B_0) \ldots)$ and $\bar{A} = f(\ldots f(E_2,B_0) \ldots)$, or if $A = f(\ldots f(B_0,E_1) \ldots)$ and $\bar{A} = f(\ldots f(B_0,E_2) \ldots)$. In each of the cases $A = f(\ldots f(E_1,B_0) \ldots)$ or $A = f(\ldots f(B_0,E_1) \ldots)$, our ultimate goal (which, by the way, will not be achieved in this paper) is to show that $A$ and $\bar{A}$ have the same orientation at the $(m + n + 1)$st stage and furthermore that $\dgap(E_2,B_0) = \dgap(E_1,B_0)$. In other words, in both cases we want to show that $\dege(E_2) = \dege(E_1)$, from which it would follow by Lemma \ref{lexico min preservedleft} and Lemma \ref{lexico min preservedright} that $E_2 = E_1$ whenever $e(E_1)$ is lexicographically minimal for its degree. If $\dgap(E_2,B_0) < \dgap(E_1,B_0)$, then $\deg(\maxt(E_2)) = \dege(A) - \dgap(E_2,B_0) > \dege(A) - \dgap(E_1,B_0) = \deg(\maxt(E_1))$, so under Assumption \ref{newlabel5} we have $e(\bar{A}) >_L e(A)$, which is a contradiction. Therefore, we must have \begin{equation}\label{newlabel13} \dgap(E_2,B_0) \ge \dgap(E_1,B_0). \end{equation} Now, for the remainder of this paper, we make the following 
\begin{assmpt}
\label{overarching assumption}
Either $A$ and $\bar{A}$ have the opposite orientation at the $(m + n + 1)$st stage or $\dgap(E_2,B_0) > \dgap(E_1,B_0)$.
\end{assmpt}
Notice that Assumption \ref{overarching assumption} is the negation of the statement ``$A$ and $\bar{A}$ have the same orientation at the $(m + n + 1)$st stage and $\dgap(E_2,B_0) = \dgap(E_1,B_0)$'' which we hope to eventually prove, so we will try to derive contradictions under Assumption \ref{overarching assumption}. Also notice that Assumption \ref{overarching assumption} implies that $A^{[m+n+1]}$ is \textit{not} $e$-isolated with respect to $A$. We will see in the following discussion that certain exponential Diophantine equations must hold, and we will study these exponential Diophantine equations to see how contradictions might be derived.

There exists $C \in \term(f;x)$ and positive integer $l \ge m + n + 1$ such that either $\bar{A}^{[l]} = f(C,\bar{A}^{[l-1]})$ or $\bar{A}^{[l]} = f(\bar{A}^{[l-1]},C)$, and such that $\deg(e(\bar{A}) - e(B_0)^{\frac{2^p3^q}{\dege(B_0)}}) = \deg(\maxt(C))$. Since $\deg(e(A) - e(B_0)^{\frac{2^p3^q}{\dege(B_0)}}) = \deg(\maxt(E_1))$, we must also have $\deg(e(\bar{A}) - e(B_0)^{\frac{2^p3^q}{\dege(B_0)}}) = \deg(\maxt(E_1))$, so $\deg(\maxt(C)) = \deg(\maxt(E_1))$. It follows that $\dgap(C,\bar{A}^{[l-1]}) = \dgap(E_1,B_0)$. Let us now introduce subscripts that will show more about the relation between $C$ and $B_0$. Thus, there will exist (possibly more than one choice of) $k_1, k_2 \in \mathbb{N} \cup \{0\}$ and $C_{k_1,k_2} \in \term(f;x)$ (above called $C$), such that $\bar{A}^{[m+n+k_1+k_2]}$ is either $f(C_{k_1,k_2},\bar{A}^{[m+n+k_1+k_2-1]})$ or $f(\bar{A}^{[m+n+k_1+k_2-1]},C_{k_1,k_2})$, $k_1 \ge 1$ or $k_2 \ge 1$, \begin{equation}\label{deg expression} \dege(\bar{A}^{[m+n+k_1+k_2]}) = 2^{m+k_1}3^{n+k_2}, \end{equation} and \begin{equation}\label{dgap equality} \dgap(C_{k_1,k_2},\bar{A}^{[m+n+k_1+k_2-1]}) = \dgap(E_1,B_0); \end{equation} note that, by our definition, $k_1$, $k_2$ denote the number of times $e(B_0)$ gets squared and the number of times $e(B_0)$ gets cubed, respectively, when we arrive at $\bar{A}^{[m+n+k_1+k_2]}$. We will call such a $C_{k_1,k_2}$ a \textbf{supplementing subexpression for $E_1$}; this is a generalization of the ``supplementing'' $C$ we mentioned in the paragraph following the proof of Corollary \ref{lexico min branch gen}. Notice that the case $k_1 = \pi_2$, $k_2 = \pi_1$ corresponds to the case where $A$, $\bar{A}$ have the opposite orientation at the $(m + n + 1)$st stage and where $\dgap(E_2,B_0) = \dgap(E_1,B_0)$; in this case we have $C_{k_1,k_2} = C_{\pi_2,\pi_1} = E_2$, i.e. $E_2$ is one such supplementing subexpression for $E_1$. The case $k_1 = \pi_1$, $k_2 = \pi_2$ corresponds to the case where $A$ and $\bar{A}$ have the same orientation at the $(m + n + 1)$st stage and $\dgap(C_{k_1,k_2},\bar{A}^{[m+n+k_1+k_2-1]}) = \dgap(E_2,B_0)$. Since $\dgap(E_2,B_0) > \dgap(E_1,B_0)$ by Assumption \ref{overarching assumption}, we have $\dgap(C_{k_1,k_2},\bar{A}^{[m+n+k_1+k_2-1]}) > \dgap(E_1,B_0)$, which is a contradiction. Therefore, we can exclude the case $k_1 = \pi_1$, $k_2 = \pi_2$ (which we will call the \textbf{excluded case}) in our analysis below, because this case cannot happen under Assumption \ref{overarching assumption}.

Notice that either \[2^{m+k_1}3^{n+k_2} - \dgap(C_{k_1,k_2},\bar{A}^{[m+n+k_1+k_2-1]}) = 3\dege(C_{k_1,k_2})\] or \[2^{m+k_1}3^{n+k_2} - \dgap(C_{k_1,k_2},\bar{A}^{[m+n+k_1+k_2-1]}) = 2\dege(C_{k_1,k_2}),\] and both $3\dege(C_{k_1,k_2})$, $2\dege(C_{k_1,k_2})$ must be the product of a power of 2 and a power of 3. Since $2^{m+k_1}3^{n+k_2} - \dgap(C_{k_1,k_2},\bar{A}^{[m+n+k_1+k_2-1]}) = 2^{m+k_1}3^{n+k_2} - \dgap(E_1,B_0) = 2^{m+k_1}3^{n+k_2} - (2^{m+\pi_1}3^{n+\pi_2} - 2^{i+\pi_2}3^{j+\pi_1})$, we must have \begin{equation}\label{deg equation two} 2^{m+k_1}3^{n+k_2} - (2^{m+\pi_1}3^{n+\pi_2} - 2^{i+\pi_2}3^{j+\pi_1}) = 2^{l_1}3^{l_2}, \end{equation} where $l_1, l_2 \in \mathbb{N} \cup \{0\}$. Solving (\ref{deg equation two}) will make it easier for us to determine whether or not the supplementing subexpression $C_{k_1,k_2}$ exists and more generally whether or not Assumption \ref{overarching assumption} can be true. Notice that Equation (\ref{deg equation two}) is essentially the equation \begin{equation}\label{deg equation three} 2^a3^b + 2^c3^d = 2^e3^f + 2^g3^h. \end{equation} We will study (\ref{deg equation three}) in \cite{expdiophantine}, where some partial results are proven.

\end{document}